\numberwithin{equation}{section}
\begin{document}
\title{Scaling Up Quasi-Newton Algorithms: Communication Efficient Distributed SR1}
\titlerunning{Communication Efficient Distributed SR1}
%

\author{Majid Jahani \and Mohammadreza Nazari \and Sergey Rusakov \and Albert S. Berahas \and Martin Tak\'a\v c}
\authorrunning{Majid Jahani et al.}
%
\institute{Lehigh University, Bethlehem, PA, 18015, USA}
\maketitle              

\begin{abstract}
In this paper, we present a scalable distributed implementation of the Sampled Limited-memory Symmetric Rank-1 (S-LSR1) algorithm. First, we show that a naive distributed implementation of S-LSR1 requires multiple rounds of expensive communications at every iteration and thus is inefficient. We then propose DS-LSR1, a communication-efficient variant 
that: $(i)$ drastically reduces the amount of data communicated at every iteration, $(ii)$ has favorable work-load balancing across nodes, and $(iii)$ is matrix-free and inverse-free. The proposed method scales well in terms of both the dimension of the problem and the number of data points. Finally, we illustrate the empirical performance of DS-LSR1 on a standard neural network training task.

\keywords{SR1 \and Distributed Optimization \and  Deep Learning.}
\end{abstract}
%
%
%

\section{Introduction}
In the last decades, significant efforts have been devoted to the development of optimization algorithms for machine learning. Currently, due to its fast learning properties, low per-iteration cost, and ease of implementation, the stochastic gradient (SG) method \cite{robbins1951stochastic,bottou2004large}, and its adaptive \cite{duchi2011adaptive,kingma2014adam}, variance-reduced \cite{johnson2013accelerating,schmidt2017minimizing,defazio2014saga} and distributed \cite{recht2011hogwild,tsitsiklis1986distributed,zinkevich2010parallelized,dean2012large} variants are the preferred optimization methods for large-scale machine learning applications. Nevertheless, these methods have several drawbacks; they are highly sensitive to the choice of hyper-parameters and are cumbersome to tune, and they suffer from ill-conditioning \cite{berahas2017investigation,xu2017second,bottou2018optimization}. More importantly, these methods offer a limited amount of benefit in distributed computing environments since they are usually implemented with small mini-batches, and thus spend more time communicating instead of performing ``actual'' computations.  This shortcoming can be remedied to some extent by increasing the batch sizes, however, there is a point after which the increase in computation is not offset by the faster convergence \cite{takac2013mini}.

Recently, there has been an increased interest in (stochastic) second-order and quasi-Newton methods by the machine learning community; see e.g., \cite{byrd2011use,martens2010deep,bollapragada2016exact,Roosta-Khorasani2018,xu2017newton,schraudolph2007stochastic,byrd2016stochastic,curtis2016self,berahas2016multi,berahas2017robust,jahani2018efficient,keskar2016adaqn}. These methods judiciously incorporate curvature information, and thus mitigate some of the issues that plague first-order methods. Another benefit of these methods is that they are usually implemented with larger batches, and thus better balance the communication and computation costs. Of course, this does not come for free; (stochastic) second-order and quasi-Newton methods are more memory intensive and more expensive (per iteration) than first-order methods. This naturally calls for distributed implementations.

\begin{wrapfigure}{r}{0.35\textwidth}
\vspace{-5 pt}
  \centering
    \includegraphics[width=0.34\textwidth]{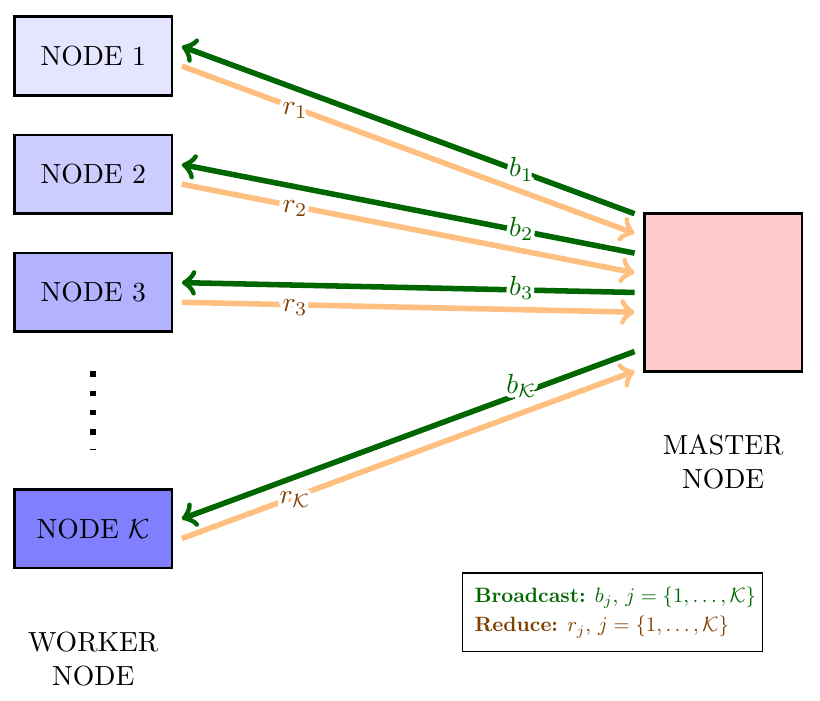}
    \vspace{-8 pt}
  \caption{ Distributed Computing Schematic.}
  \label{fig:dist_frame}
  \vspace{-24 pt}
\end{wrapfigure}
In this paper, we propose an efficient distributed variant of the Sampled Limited-memory Symmetric Rank-1 (S-LSR1) method \cite{berahas2019sqn}---DS-LSR1---that operates in the master-worker framework (Figure \ref{fig:dist_frame}). Each worker node has a portion of the dataset, and performs local computations using solely that information and information received from the master node. The proposed method is matrix-free (Hessian approximation never explicitly constructed) and inverse-free (no matrix inversion). To this end, we leverage the compact form of the SR1 Hessian approximations \cite{byrd1994},
and utilize sketching techniques \cite{woodruff2014sketching} to approximate several required quantities. We show that, contrary to a naive distributed implementation of S-LSR1, the method is communication-efficient and has favorable work-load balancing across nodes. 
Specifically, the naive implementation requires $\mathcal{O}(md)$ communication, whereas our approach only requires $\mathcal{O}(m^2)$ communication, where $d$ is the dimension of the problem, $m$ is the LSR1 memory and $m\ll d$\footnote{\scriptsize{Note, these costs are on top of the communications that are common to both approaches.}}. Furthermore, in our approach the heavy computations are done by the worker nodes and the master node performs only simple aggregations, whereas in the naive approach the computationally intensive operations, e.g., Hessian-vector products, are computed locally by the master node. Finally, we show empirically that DS-LSR1 has good strong and weak scaling properties, and illustrate the performance of the method on a standard neural network training task. 

\paragraph{Problem Formulation and Notation}
We focus on machine learning empirical risk minimization problems that can be expressed as:
\begin{equation}   \label{eq:opt_prob}
    \min_{w \in \mathbb{R}^d} F(w) := \frac{1}{n} \sum_{i=1}^n f(w;x^i,y^i) = \frac{1}{n} \sum_{i=1}^n f_i(w),
\end{equation}
where $f: \mathbb{R}^d \rightarrow \mathbb{R}$ is the composition of a prediction function
(parametrized by $w$) and a loss function, and $(x^i,y^i)_{i=1}^n$ denote the training examples (samples). Specifically, we focus on deep neural network training tasks where the function $F$ is nonconvex, and the dimension $d$ and number of samples $n$ are large. 

The paper is organized as follows. We conclude this section with a discussion of related work. We describe the classical (L)SR1  and sampled LSR1 (S-LSR1) methods in Section \ref{sec:slsr1}. In Section \ref{sec:ds-lsr1}, we present DS-LSR1, our proposed distributed variant of the sampled LSR1 method. We illustrate the scaling properties of DS-LSR1 and the empirical performance of the method on deep learning tasks in Section \ref{sec:num_res}. Finally, in Section \ref{sec:fin_rem} we provide some final remarks. 

\paragraph{Related Work} The Symmetric Rank-1 (SR1) method \cite{conn1991convergence,khalfan1993theoretical} and its limited-memory variant (LSR1) \cite{lu1996study} are quasi-Newton methods that have gained significant attention by the machine learning community in recent years \cite{berahas2019sqn,erway2018trust}. These methods incorporate curvature (second-order) information using only gradient (first-order) information. Contrary to  arguably the most popular quasi-Newton method, (L)BFGS \cite{nocedal_book,liu1989limited}, the (L)SR1 method does not enforce that the Hessian approximations are positive definite, and is usually implemented with a trust-region \cite{nocedal_book}. This has several benefits: $(1)$ the method is able to exploit negative curvature, and $(2)$ the method is able to efficiently escape saddle points.

There has been a significant volume of research on distributed algorithms for machine learning; specifically, distributed gradient methods \cite{zinkevich2010parallelized,bottou2010large,recht2011hogwild,tsitsiklis1986distributed,chu2007map}, distributed Newton methods \cite{shamir2014communication,jahani2018efficient,zhang2015communication} and distributed quasi-Newton methods \cite{chen2014large,dean2012large,agarwal2014reliable}. 
Possibly the closest work to ours is VF-BFGS \cite{chen2014large}, in which the authors propose a vector-free implementation of the classical LBFGS method. We leverage several of the techniques proposed in \cite{chen2014large}, however, what differentiates our work is that we focus on the S-LSR1 method. Developing an efficient distributed implementation of the S-LSR1 method is not as straight-forward as LBFGS for several reasons: $(1)$ the construction and acceptance of the curvature pairs, $(2)$ the trust-region subproblem, and $(3)$ the step acceptance procedure. 

\section{Sampled limited-memory SR1 (S-LSR1)}
\label{sec:slsr1}

In this section, we review the sampled LSR1 (S-LSR1) method \cite{berahas2019sqn}, and discuss the components that can be distributed. We begin by describing the classical (L)SR1 method as this will set the stage for the presentation of the S-LSR1 method. At the $k$th iteration, the SR1 method computes a new iterate via
\begin{align*}  
    w_{k+1} = w_k + p_k,
\end{align*}
where $p_k$ is the minimizer of the following subproblem
\begin{align}   
    &\textstyle{\min_{\| p \| \leq \Delta_k}}  \; m_k(p) = F(w_k) + \nabla F(w_k)^Tp + \tfrac{1}{2} p^T B_k p , \label{eq:tr_obj}
\end{align}
$\Delta_k$ is the trust region radius, $B_k$ is the SR1 Hessian approximation 
\begin{align}   \label{eq:sr1_hess}
    B_{k+1} = B_k +  \tfrac{(y_k - B_ks_k)(y_k - B_ks_k)^T}{(y_k - B_ks_k)^Ts_k},
\end{align}
and  $(s_k,y_k) = (w_{k} - w_{k-1}, \nabla F(w_{k}) - \nabla F(w_{k-1}))$ are the curvature pairs. In the limited memory version, the matrix $B_k$ is defined as the result of applying $m$ SR1 updates to a multiple of the identity matrix using the set of $m$ most recent curvature pairs $\{s_i, y_i\}_{i=k-m}^{k-1}$ kept in storage.

The main idea of the S-LSR1 method is to use the SR1 updating formula, but to construct the Hessian approximations using sampled curvature pairs instead of pairs that are constructed as the optimization progresses. At every iteration, $m$ curvature pairs are constructed via random sampling around the current iterate; see Algorithm \ref{alg:calSY1}. The S-LSR1 method is outlined in Algorithm \ref{alg:slsr1}. The components of the algorithms that can be distributed are highlighted in \textcolor{magenta}{magenta}.

Several components of the above algorithms can be distributed. Before we present the distributed implementations of the S-LSR1 method, we discuss several key elements of the method:  $(1)$ Hessian-vector products; $(2)$ curvature pair construction; $(3)$ curvature pair acceptance; $(4)$ search direction computation; $(5)$ step acceptance procedure; and $(6)$ initial Hessian approximations. 

For the remainder of the paper, let $S_k=[s_{k,1},s_{k,2},\dots, s_{k,m}] \in \mathbb{R}^{d\times m}$ and $Y_k=[y_{k,1},y_{k,2},\dots, y_{k,m}] \in \mathbb{R}^{d\times m}$ denote the curvature pairs constructed at the $k$th iteration, $S_{k}^i \in \mathbb{R}^{d\times m}$ and $Y_{k}^i \in \mathbb{R}^{d\times m}$ denote the curvature pairs constructed at the $k$th iteration by the $i$th node, and $B_k^{(0)} = \gamma_k I \in \mathbb{R}^{d\times d}$, $\gamma_k\geq0$, denote the initial Hessian approximation at the $k$th iteration. 

\begin{minipage}{0.52\textwidth}
\begin{center}
\begin{algorithm}[H]
 {\small 
\caption{Sampled LSR1 (S-LSR1)}
  \label{alg:slsr1}
 {\bf Input:} $w_{0}$ (initial iterate), $\Delta_0$ (initial trust region radius).

  \begin{algorithmic}[1]
  \For {$k=0,1,2,...$}
    \State Compute \textcolor{magenta}{$F(w_k)$} and \textcolor{magenta}{$ \nabla F(w_k)$}
    \State Compute $(S_k,Y_k)$ (Algorithm \ref{alg:calSY1})
    \State Compute $p_k$ (solve subproblem \eqref{eq:tr_obj})
    \State Compute $\rho_k = \frac{F(w_k) - \textcolor{magenta}{F(w_k + p_k)}}{\textcolor{magenta}{m_k(0) - m_k(p_k)}}$
    \State \textbf{if } $\rho_k \geq \eta_1$ \textbf{then } Set $w_{k+1} = w_k + p_k$
    \State \textbf{else } Set $w_{k+1} = w_k$
    \State $\Delta_{k+1} = \texttt{adjustTR}(\Delta_{k},\rho_k)$ \cite[Appendix B.3]{berahas2019sqn}
\EndFor
  \end{algorithmic}
  }
\end{algorithm}
\end{center}
\end{minipage}
\begin{minipage}{0.46\textwidth}
\begin{center}
\begin{algorithm}[H]
{ 
\small
\caption{Construct new $(S_k,Y_k)$ curvature pairs}
  \label{alg:calSY1}
 {\bf Input:} $w_k$ (current iterate), $m$ (memory), $r$ (sampling radius), $S_k = [\;]$, $Y_k =[\;]$ (curvature pair containers). 

  \begin{algorithmic}[1]
  \For {$i=1,2,...,m$}
        \State Sample a random direction $\sigma_i$
        \State Sample point $\bar{w}_i = w_k + r\sigma_i $
        \State Set $s_i = w_k - \bar{w}_i$ and
        
        \textcolor{magenta}{$ \quad \; y_i = \nabla^2 F(w_k)s_i$}
        \State Set $S_k = [S_k \; s_i]$ and $ Y_k = [Y_k \; y_i]$
    \EndFor
  \end{algorithmic}
  {\bf Output:} $S$, $Y$
  }
\end{algorithm}
\end{center}
\end{minipage}

\paragraph{Hessian-vector products} Several components of the algorithms above require the calculation of Hessian vector products of the form $B_k v$. In the large-scale setting, it is not memory-efficient, or even possible for some applications, to explicitly compute and store the $d\times d$ Hessian approximation matrix $B_{k}$. Instead, one can exploit the compact representation of the SR1 matrices \cite{byrd1994} and compute:
\begin{gather}   
     B_{k+1}v = B_k^{(0)} v+ (Y_k- B_k^{(0)}S_k)( \underbrace{D_k+L_k +L_k^T -S_k^TB_k^{(0)}S_k}_{M_k})^{-1}(Y_k-B_k^{(0)}S_k)^Tv, \nonumber\\
     D_k = {diag}[s_{k,1}^Ty_{k,1}, \dots ,s_{k,m}^T y_{k,m}], \quad (L_k)_{j,l} = \begin{cases}
  s_{k,j-1}^T y_{k,l-1} & \text{if }j>l, \\
  0 & \text{otherwise} \label{eq:hessFreeD}.
\end{cases}
\end{gather}
Computing $B_{k+1}v$ via \eqref{eq:hessFreeD} is both memory and computationally efficient; the complexity of computing $B_{k+1}v$ is $\mathcal{O}(m^2d)$ \cite{byrd1994}. 

\paragraph{Curvature pair construction} For ease of exposition, we presented the curvature pair construction routine (Algorithm \ref{alg:calSY1}) as a sequential process. However, this need not be the case; all pairs can be constructed simultaneously. First, generate a random matrix $S_k \in \mathbb{R}^{d \times m}$, and then compute $Y_k = \nabla^2 F(w_k)S_k \in \mathbb{R}^{d \times m}$. We discuss a distributed implementation of this routine in the following sections. 

\paragraph{Curvature pair acceptance} In order for the S-LSR1 Hessian update \eqref{eq:sr1_hess} to be well defined, and for numerical stability, we require certain conditions on the curvature pairs employed; see \cite[Chapter 6]{nocedal_book}. Namely, for a given $\eta>0$, we impose that the Hessian approximation $B_{k+1}$ is only updated using the curvature pairs that satisfy the following condition:
\begin{align}   \label{eq:cond_slsr1}
    | s_{k,j}^T(y_{k,i}-B_k^{(j-1)}s_{k,j}) | \geq \eta \| s_{k,j}\| \| y_{k,i}-B_k^{(j-1)}s_{k,j} \|,
\end{align}
for $j=1,\dots,m$, where $B_k^{(0)}$ is the initial Hessian approximation and $B_k^{(j-1)}$, for $j=2,\dots,m$, is the Hessian approximation constructed using only curvature pairs $\{s_l,y_l \}$, for $l<j$, that satisfy \eqref{eq:cond_slsr1}. Note, $B_{k+1} = B_{k}^{(m)}$. Thus, potentially, not all curvature pairs returned by Algorithm \ref{alg:calSY1} are used to update the S-LSR1 Hessian approximation. Checking this condition is not trivial and requires $m$ Hessian vector products. In \cite[Appendix B.5]{berahas2019sqn}, the authors propose a recursive memory-efficient mechanism to check and retain only the pairs that satisfy \eqref{eq:cond_slsr1}. 

\paragraph{Search direction computation} The search direction $p_k$ is computed by solving subproblem \eqref{eq:tr_obj} using CG-Steihaug; see \cite[Chapter 7]{nocedal_book}
. This procedure requires the computation of Hessian vectors products of the form \eqref{eq:hessFreeD}. 

\paragraph{Step acceptance procedure} In order to determine if a step is successful (Line 6, Algorithm \ref{alg:slsr1}) one has to compute the function value at the trial iterate and the predicted model reduction. This entails a function evaluation and a Hessian vector product. The acceptance ratio $\rho_k$ determines if a step is successful, after which the trust region radius has to be adjusted accordingly. For brevity we omit the details from the paper and refer the interested reader to \cite[Appendix B.3]{berahas2019sqn}. 

\paragraph{Initial Hessian approximations $B_k^{(0)}$} 
In practice, it is not clear how to choose the initial Hessian approximation. We argue, that in the context of S-LSR1, a good choice is $B_k^{(0)} = 0$. In Figure \ref{SmalllvdNet} we show the eigenvalues of the true Hessian and the eigenvalues of the S-LSR1 matrices for different values of $\gamma_k$ ($B_k^{(0)} = \gamma_kI$) for a toy problem. As is clear, the eigenvalues of the S-LSR1 matrices with $\gamma_k=0$ better match the eigenvalues of the true Hessian. Moreover, by setting $\gamma_k= 0$, the rank of the approximation is at most $m$ and thus the CG algorithm (used to compute the search direction) terminates in at most $m$ iterations, whereas the CG algorithm may require as many as $d\gg m$ iterations when $\gamma_k \neq 0$. Finally, $B_k^{(0)} = 0$ removes a hyper-parameter. Henceforth, we assume that $B_k^{(0)} = 0$, however, we note that our method can be extended to $B_k^{(0)} \neq 0$. 
\begin{figure}[]
	\centering
	\includegraphics[trim=0.6cm 0.5cm 1.0cm 0.7cm,width=0.24\textwidth]{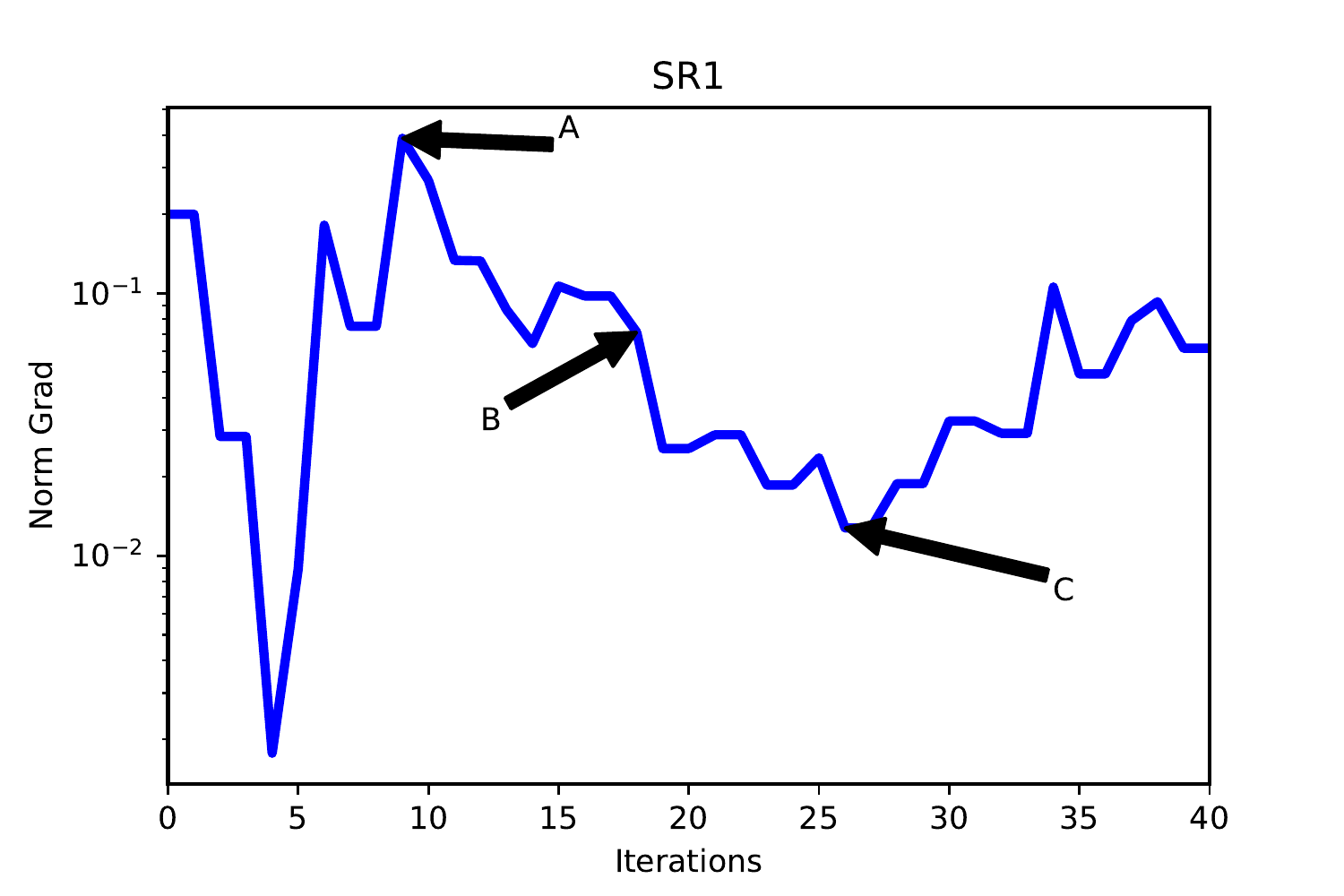}
 	\includegraphics[trim=0.6cm 0.7cm 1.0cm 0.7cm,width=0.24\textwidth]{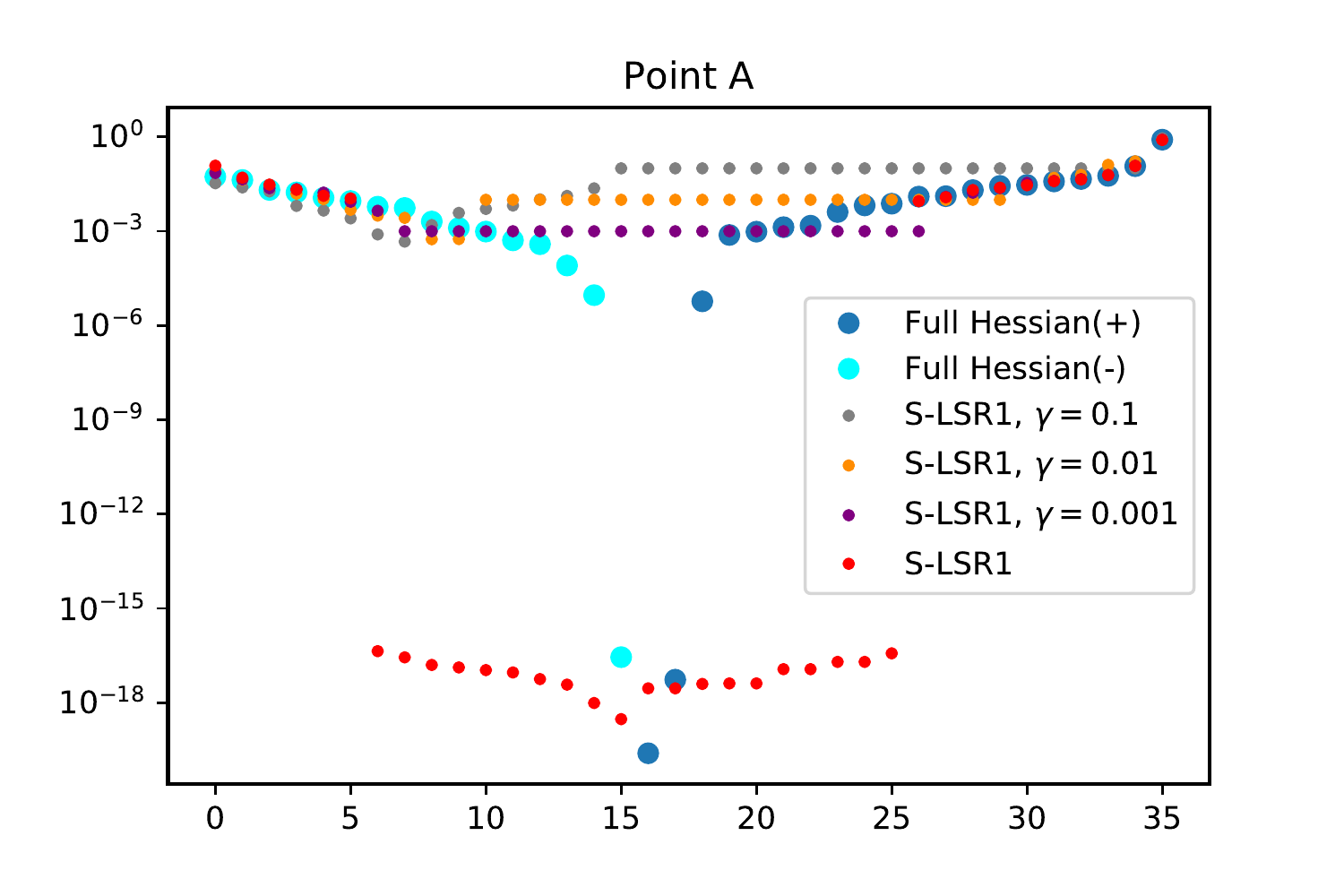}
	\includegraphics[trim=0.6cm 0.7cm 1.0cm 0.7cm,width=0.24\textwidth]{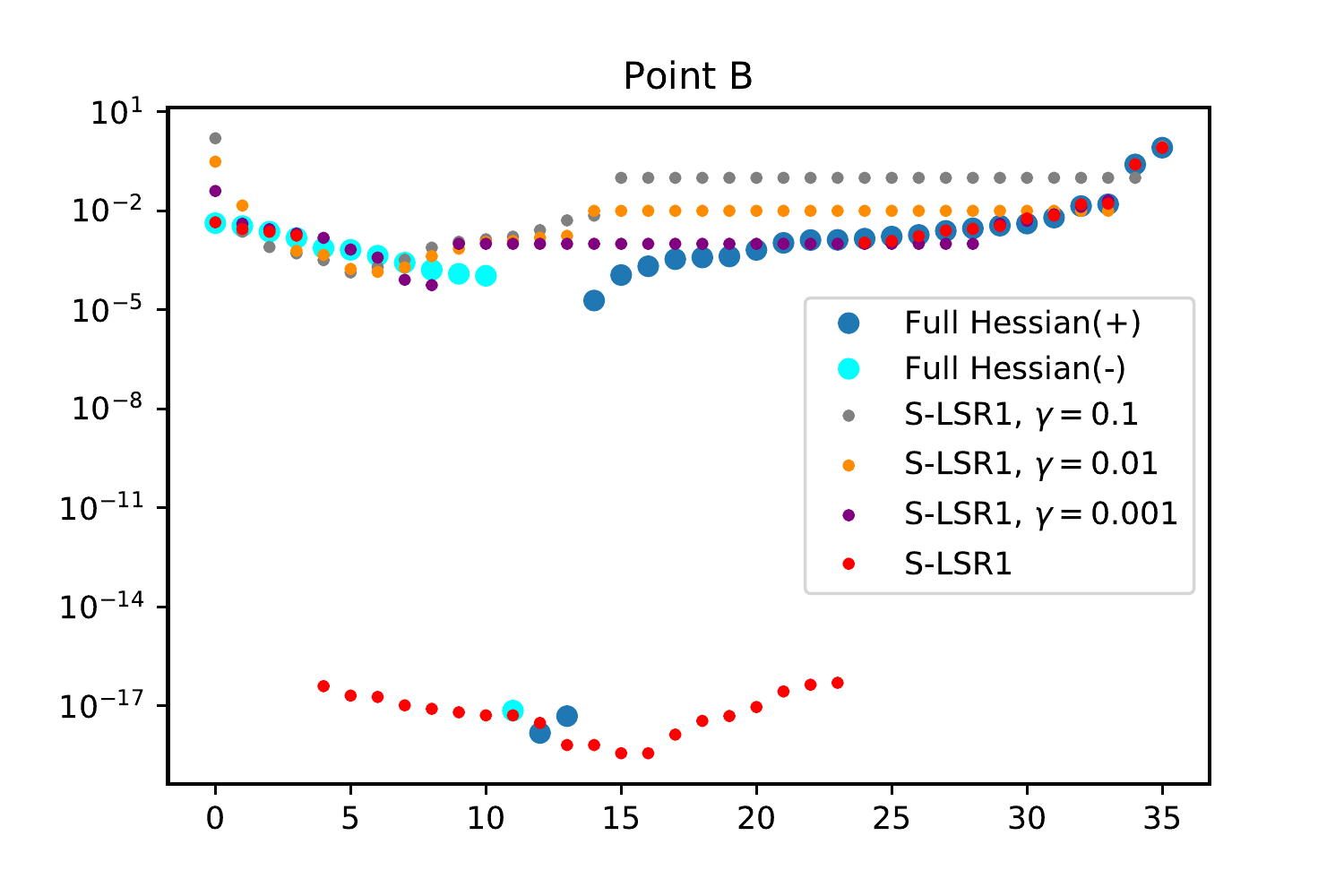}
	\includegraphics[trim=0.6cm 0.7cm 1.0cm 0.7cm,width=0.24\textwidth]{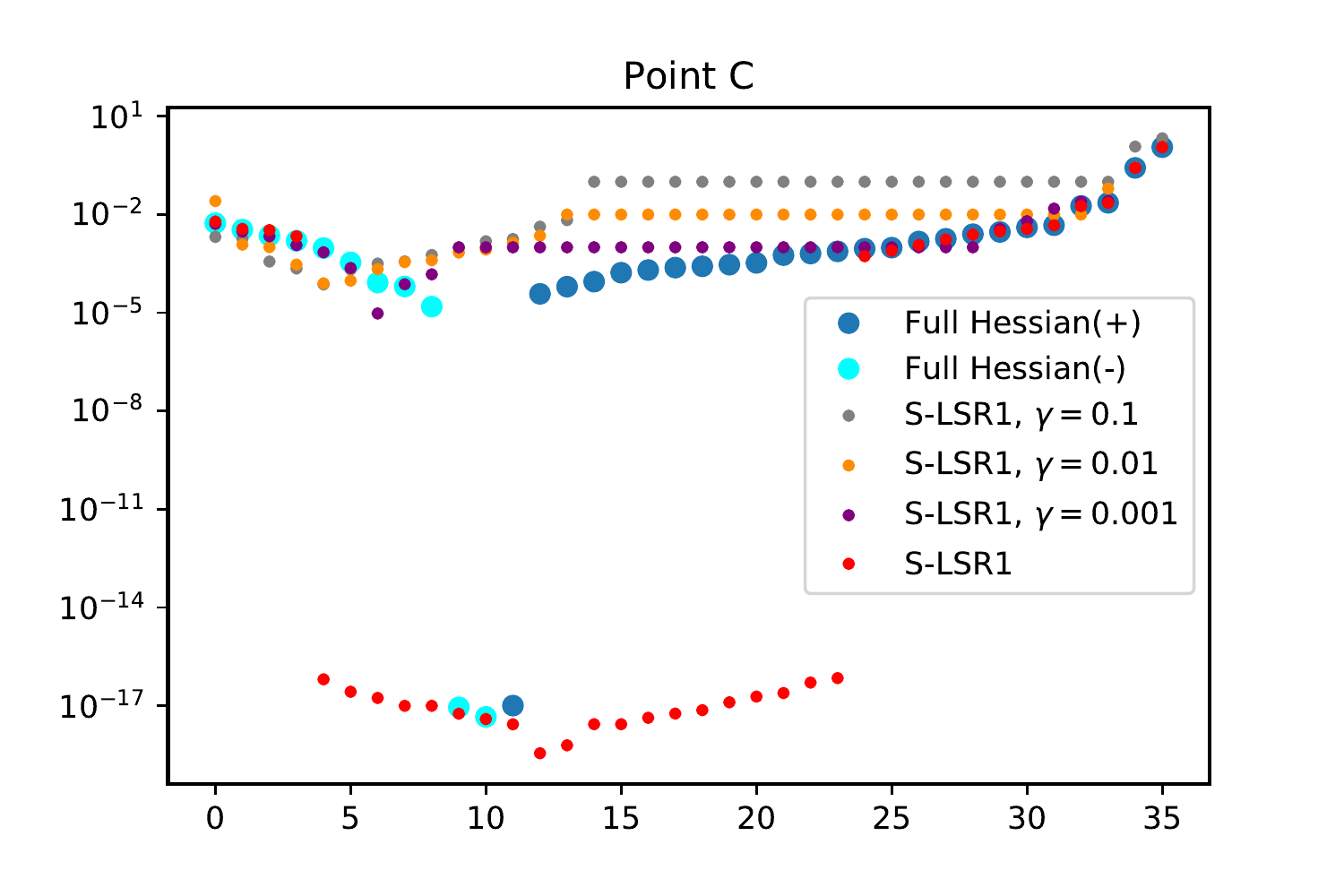}
	\caption{Comparison of the eigenvalues of S-LSR1 for different $\gamma$ (@ A, B, C) for a toy classification problem.
	\label{SmalllvdNet}}
\end{figure}

\subsection{Naive Distributed Implementation of S-LSR1}
In this section, we describe a naive distributed implementation of the S-LSR1 method, where the data is stored across $\mathcal{K}$ machines. At each iteration $k$, we broadcast the current iterate $w_k$ to every worker node. The worker nodes calculate the local gradient, and construct local curvature pairs $S_{k}^i$ and $Y_{k}^i$. The local information is then reduced to the master node to form $\nabla F(w_k)$, $S_k$ and $Y_k$. Next, the SR1 curvature pair condition \eqref{eq:cond_slsr1} is recursively checked on the master node. Given a set of accepted curvature pairs, the master node computes the search direction $p_k$. We should note that the last two steps could potentially be done in a distributed manner at the cost of $m+1$ extra expensive rounds of communication. Finally, given a search direction the trial iterate is broadcast to the worker nodes where the local objective function is computed and reduced to the master node, and a step is taken.

As is clear, in this distributed implementation of the S-LSR1 method, the amount of information communicated is large, and the amount of computation performed on the master node is significantly larger than that on the worker nodes. Note, all the Hessian vector products, as well as the computations of the $M_k^{-1}$ are performed on the master node. The precise communication and computation details are summarized in Tables \ref{tbl:comm} and \ref{tbl:comp}. 

\section{Efficient Distributed S-LSR1 (DS-LSR1)} 
\label{sec:ds-lsr1}

The naive distributed implementation of  S-LSR1 has several significant deficiencies. We propose a distributed variant of the S-LSR1 method that alleviates these issues, is communication-efficient, has favorable work-load balancing across nodes and is inverse-free and matrix-free. To do this, we leverage the form of the compact representation of the S-LSR1 updating formula ($B_k^{(0)}=0$)
\begin{align}   \label{eq:compactFormVec1_1}
     B_{k+1}v = Y_kM_k^{-1}Y_k^Tv,
\end{align}
and the form of the SR1 condition \eqref{eq:cond_slsr1}. 
We observe the following: one need not communicate the full $S_k$ and $Y_k$ matrices, rather one can communicate $S_k^TY_k$, $S_k^TS_k$ and $Y_k^TY_k$. We now discuss the means by which we: $(1)$ reduce the amount of information communicated and $(2)$ balance the computation across nodes. 

\subsection{Reducing the Amount of Information Communicated}
As mentioned above, communicating  curvature pairs is not necessary; instead one can just communicate inner products of the pairs, reducing the amount of communication from $2md$ to $3m^2$. In this section, we show how this can be achieved, and in fact show that this can be further reduced to $m^2$. 

\paragraph{Construction of $S_k^TS_k$ and $S_k^TY_k$} Since the curvature pairs are scale invariant \cite{berahas2019sqn}, $S_k$ can be any random matrix. Therefore, each worker node can construct this matrix by simply sharing random seeds. In fact, the matrix $S_k^TS_k$ need not be communicated to the master node as the master node can construct and store this matrix. With regards to the $S_k^TY_k$, each worker node can construct local versions of the $Y_k$ curvature pair, $Y_{k}^i$, and send $S_k^TY_{k}^i$ to the master node for aggregation, i.e., $S_k^TY_k = \sfrac{1}{\mathcal{K}}\sum_{i=1}^{\mathcal{K}}S_k^TY_k^i$. Thus, the amount of information communicated to the master node is $m^2$. 

\paragraph{Construction of $Y_k^TY_k$} Constructing the matrix $Y_k^TY_k$ in distributed fashion, without communicating local $Y_{k}^i$ matrices, is not that simple. In our communication-efficient method, we propose that the matrix is approximated via sketching \cite{woodruff2014sketching}, using quantities that are already computed, i.e., $Y_k^TY_k \approx Y_k^TS_kS_k^TY_k$. In order for the sketch to be well defined, $S_k \sim \mathcal{N}(0,I/m)$, thus satisfying the conditions of sketching matrices \cite{woodruff2014sketching}. By using this technique, we construct an approximation to $Y_k^TY_k$ with no additional communication. 
Note, the sketch size in our setting is equal to the memory size $m$. We should also note that this approximation is only used in checking the SR1 condition \eqref{eq:cond_slsr1}, which is not sensitive to approximation errors, and not in the Hessian vector products. 

\subsection{Balancing the Computation Across the Nodes}
Balancing the computation across the nodes does not come for free. We propose the use of a few more rounds of communication. The key idea is to exploit the compact representation of the SR1 matrices and perform as much computation as possible on the worker nodes. 

\paragraph{Computing Hessian vector products $B_{k+1}v$} The Hessian vector products \eqref{eq:compactFormVec1_1}, require products between the matrices $Y_k$, $M_k^{-1}$ and a vector $v$. Suppose that the we have $M_k^{-1}$ on the master node, and that the master node broadcasts this information as well as the vector $v$ to the worker nodes. The worker nodes then locally compute $M_k^{-1}(Y_k^i)^Tv$, and send this information back to the master node. The master node then reduces this to form $M_k^{-1}(Y_k)^Tv$, and broadcasts this vector back to the worker nodes. This time the worker nodes compute $Y_k^{i}M_k^{-1}(Y_k)^Tv$ locally, and then this quantity is reduced by the master node; the cost of this communication is $d$. Namely, in order to compute Hessian vector products, the master node performs two aggregations, the bulk of the computation is done on the worker nodes and the communication cost is $m^2 + 2m+2d$. 

\paragraph{Checking the SR1 Condition \ref{eq:cond_slsr1}} As proposed in \cite{berahas2019sqn}, at every iteration condition \eqref{eq:cond_slsr1} is checked recursively by the master node. For each pair in memory, checking this condition amounts to a Hessian vector product as well as the use of inner products of the curvature pairs. Moreover, it requires the computation of $(M_k^{(j)})^{-1} \in \mathbb{R}^{j \times j}$, for $j=1,\dots,m$, where $M_k^{-1} = (M_k^{(m)})^{-1}$. 

\paragraph{Inverse-Free Computation of $M_k^{-1}$} The matrix $M_k^{-1}$ is non-singular \cite{byrd1994}, depends solely on inner products of the curvature pairs, and is used in the the computation of Hessian vector products \eqref{eq:compactFormVec1_1}. This matrix is constructed recursively (its dimension grows with the memory) by the master node as condition \eqref{eq:cond_slsr1} is checked. We propose an inverse-free approach for constructing this matrix. Suppose we have the matrix $(M_k^{(j)})^{-1}$, for some $j=1,\dots,m-1$, and that the new curvature pair $(s_{k,j+1},y_{k,j+1})$ satisfies \eqref{eq:cond_slsr1}. One can show that 
\begin{equation*}
(M_k^{(j+1)})^{-1}=
\left[
\begin{array}{c:c}
(M_k^{(j)})^{-1}+\zeta(M_k^{(j)})^{-1}uv^T(M_k^{(j)})^{-1} & -\zeta(M_k^{(j)})^{-1} u \\
\hdashline
- \zeta v^T(M_k^{(j)})^{-1} & \zeta
\end{array}
\right]
\end{equation*}
where $\zeta = \sfrac{1}{c - v^T(M_k^{(j)})^{-1}u}$, $v^T = s_{k,j+1}^TY_{k,1:l}$ and $Y_{k,1:l}=[y_{k,1},\dots,y_{k,l}]$ for $l\leq j$, $ u = v$, and $c =s_{k,j+1}^T y_{k,j+1}$. We should note that the matrix $(M_k^{(1)})^{-1}$ is a singleton. Consequently, constructing $(M_k^{(j)})^{-1}$ in an inverse-free manner allows us to compute Hessian vector products and check condition \eqref{eq:cond_slsr1} efficiently.

\subsection{The Distributed S-LSR1 (DS-LSR1) Algorithm}
Pseudo-code for our proposed distributed variant of the S-LSR1 method and the curvature pair sampling procedure are given in Algorithms \ref{alg:dslsr1} and \ref{alg:calSYdist}, respectively. Right arrows denote broadcast steps and left arrows denote reduce steps. For brevity we omit the details of the distributed CG-Steihaug algorithm (Line 5, Algorithm \ref{alg:dslsr1}), but note that it is a straightforward adaptation of \cite[Algorithm 7.2]{nocedal_book} using quantities described above computed in distributed fashion.

\begin{algorithm}[H]
{\small 
\caption{Distributed Sampled LSR1 (DS-LSR1)}
  \label{alg:dslsr1}
 {\bf Input:} $w_{0}$ (initial iterate), $\Delta_0$ (initial trust region radius), 
$m$ (memory). 

\textbf{Master Node:} \hfill \textbf{Worker Nodes (\pmb{$i=1,2,\dots,\mathcal{K}$}):} 
  \begin{algorithmic}[1]
  \For {$k=0,1,2,...$}
    \State {\color{green!40!black}{\it \textbf{Broadcast:}}} $w_k$ \hfill \textcolor{NavyBlue}{$\pmb{\longrightarrow}$} \hfill Compute  $F_i(w_k)$, $\nabla F_i(w_k)$
    \State {\color{orange!50!black}{\it \textbf{Reduce:}}} $F_i(w_k) $, $\nabla F_i(w_k)$ to $F(w_k) $, $\nabla F(w_k)$ \hfill \textcolor{NavyBlue}{$\pmb{\longleftarrow}$} \hfill $\ \ \ $
    \State Compute new $(M^{-1}_k,Y_k,S_k)$ pairs via Algorithm \ref{alg:calSYdist}
    \State Compute $p_k$ via CG-Steihaug \cite[Algorithm 7.2]{nocedal_book}
    \State  {\color{green!40!black}{\it \textbf{Broadcast:}}} $p_k$, $M^{-1}_k$ \quad \textcolor{NavyBlue}{$\pmb{\longrightarrow}$} \hfill Compute $M_k^{-1}(Y_k^i)^T p_k $, $\nabla F_i(w_k)^Tp_k$, $F_i(w_k +p_k)$  
    \State {\color{orange!50!black}{\it \textbf{Reduce:}}} $M_k^{-1}(Y_k^i)^T p_k $, $\nabla F_i(w_k)^Tp_k$, $F_i(w_k +p_k)$ to $M_k^{-1}Y_k^T p_k $, $\nabla F(w_k)^Tp_k$, $F(w_k +p_k)$ \hfill \textcolor{NavyBlue}{$\pmb{\longleftarrow}$} \hfill $\ \ \ $
    \State {\color{green!40!black}{\it \textbf{Broadcast:}}} $M_k^{-1}Y_k^T p_k $ \hfill \textcolor{NavyBlue}{$\pmb{\longrightarrow}$} \hfill Compute $(Y_k^{i})^TM_k^{-1}Y_k^T p_k $
    \State {\color{orange!50!black}{\it \textbf{Reduce:}}} $(Y_k^{i})^TM_k^{-1}Y_k^{i} p_k $ to $B_kp_k=(Y_k)^TM_k^{-1}Y_k p_k $  \hfill \textcolor{NavyBlue}{$\pmb{\longleftarrow}$} \hfill $\ \ \ $ 
    \State Compute $\rho_k = \frac{F(w_k) - F(w_k + p_k)}{m_k(0) - m_k(p_k)}$
     \State \textbf{if } $\rho_k \geq \eta_1$ \textbf{then } Set $w_{k+1} = w_k + p_k$ \textbf{ else } Set $w_{k+1} = w_k$
    \State $\Delta_{k+1} = \texttt{adjustTR}(\Delta_{k},\rho_k)$ \cite[Appendix B.3]{berahas2019sqn}
\EndFor
  \end{algorithmic}
  }
\end{algorithm}

 \begin{algorithm}[]
{ 
\small
\caption{Construct new $(S_k,Y_k)$ curvature pairs}
  \label{alg:calSYdist}
 {\bf Input:} $w_k$ (iterate), $m$ (memory), $S_k = [\;]$, $Y_k =[\;]$ (curvature pair containers). 

\textbf{Master Node:} \hfill \textbf{Worker Nodes (\pmb{$i=1,2,\dots,\mathcal{K}$}):} 
  \begin{algorithmic}[1]
  \State {\color{green!40!black}{\it \textbf{Broadcast:}}} $\bar{S}_k$ and $w_k$ \hfill  \textcolor{NavyBlue}{$\pmb{\longrightarrow}$}  \hfill Compute   $\bar{Y}_{k,i} = \nabla^2 F_{i}(w_k)\bar{S}_k$
  \State {\color{orange!50!black}{\it \textbf{Reduce:}}} $\bar{S}_k^T\bar{Y}_{k,i}$ to  $\bar{S}_k^T\bar{Y}_k$ and $\bar{Y}_k^T\bar{S}_k\bar{S}_k^T\bar{Y}_k$ \hfill  \textcolor{NavyBlue}{$\pmb{\longleftarrow}$} \hfill Compute   $\bar{S}_k^T\bar{S}_k$ and $\bar{S}_k^T\bar{Y}_{k,i}$
  \State Check the SR1 condition \eqref{eq:cond_slsr1} and construct $M_k^{-1}$ recursively using
  
   $\bar{S}_k^T\bar{S}_k$, $\bar{S}_k^T\bar{Y}_k$ and $\bar{Y}_k^T\bar{Y}_k$ and construct list of accepted pairs $S_k$ and $Y_k$
  \State {\color{green!40!black}{\it \textbf{Broadcast:}}} the list of accepted curvature pairs
  \end{algorithmic}
  {\bf Output:} $M^{-1}$, $Y_k$, $S_k$
  }
\end{algorithm}

\subsection{Complexity Analysis - Comparison of Methods}

\begin{wraptable}{r}{0.42\textwidth}
\vspace{-22pt}
\caption{Details of quantities communicated and computed.}
\vspace{-5pt}
\label{tbl:quant_details}
\centering
\begin{scriptsize}
\begin{tabular}{@{}lc@{}}\toprule
\textbf{Variable} & \textbf{Dimension} \\ \midrule
 $w_k, \nabla F(w_k), \nabla F_i(w_k)$ \\ $p_k, Y_{k,i}M_k^{-1}Y_k^Tp_k,B_kd$ &  $d \times 1$
\\ \hdashline  
 $F(w_k), F_i(w_k)$ &  $1$
\\ \hdashline
 $S_k,S_{k,i}, Y_k,Y_{k,i}$&  $d \times m$
\\ \hdashline  
$S_k^TY_{k,i},S_{k,i}^TY_{k,i},M_k^{-1}$&  $m \times m$ 
\\\hdashline 
 $M_k^{-1}Y_{k,i}^Tp_k$ &  $m \times 1$
\\  
\bottomrule 
\end{tabular}
\end{scriptsize}
\vspace{-25pt}
\end{wraptable}
We compare the complexity of a naive distributed implementation of S-LSR1 and DS-LSR1. Specifically, we discuss the amount of information communicated at every iteration and the amount of computation performed by the nodes. Tables \ref{tbl:comm} and \ref{tbl:comp} summarize the communication and computation costs, respectively, and Table \ref{tbl:quant_details} summarizes the details of the quantities presented in the tables. 

As is clear from Tables \ref{tbl:comm} and \ref{tbl:comp} the amount of information communicated in the naive implementation ($2md+d+1$) is significantly larger than that in the DS-LSR1 method ($m^2+2d+2m+1$). Note, $m\ll d$. This can also be seen in Figure \ref{fig:float_comm} where we show for different dimension $d$ and memory $m$ the number of floats communicated at every iteration. To put this into perspective, consider a training problem where $d = 9.2M$ (e.g., VGG11 network \cite{simonyan2014very}) and $m=256$, DS-LSR1 and naive DS-LSR1 need to communicate $0.0688 \,GB$ and $8.8081 \,GB$, respectively, per iteration. In terms of computation, it is clear that in the naive approach the amount of computation is not balanced between the master and worker nodes, whereas for DS-LSR1 the quantities are balanced. 

\begin{minipage}{0.65\textwidth}
\begin{table}[H]
\caption{Communication Details.}
\label{tbl:comm}
\centering
\begin{scriptsize}
\begin{tabular}{@{}lcc@{}}\toprule
\textbf{} & \textcolor{blue}{\textbf{Naive DS-LSR1}} & \textcolor{red}{\textbf{DS-LSR1}} \\ \midrule
\color{green!40!black}{\it \textbf{Broadcast:}} & $w_k$ & $w_k, p_k, M^{-1}$ 
\\ \hdashline  
\color{orange!50!black}{\it \textbf{Reduce:}} & \shortstack{$\nabla F_i(w_k), F_i(w_k),$ \\ $S_{k,i}, Y_{k,i}$} & \shortstack{$\nabla F_i(w_k), F_i(w_k), S_k^TY_{k,i},$ \\ $Y_{k,i}M_k^{-1}Y_{k,i}p_k, M_k^{-1}Y_{k,i}^Tp_k$}  
\\
\bottomrule 
\end{tabular}
\end{scriptsize}
\end{table}
\vspace{-50 pt}
\begin{table}[H]
\caption{Computation Details.}
\label{tbl:comp}
\centering
\begin{scriptsize}
\begin{tabular}{@{}lcc@{}}\toprule
\textbf{} & \textcolor{blue}{\textbf{Naive DS-LSR1}} & \textcolor{red}{\textbf{DS-LSR1}} \\ \midrule
\color{blue!50!white}{\it \textbf{Worker:}} & $\nabla F_i(w_k), F_i(w_k), Y_{k,i}$ & \shortstack{$\nabla F_i(w_k), F_i(w_k), Y_{k,i},S_{k,i}^TY_{k,i}$ \\ $M_k^{-1}Y_{k,i}^Tp_k, Y_{k,i}M_k^{-1}Y_k^Tp_k,$ CG} 
\\ \hdashline  
\color{red!20!white}{\it \textbf{Master:}} & $M_k^{-1}, w_{k+1}, B_kd$, CG & $M_k^{-1}, w_{k+1}$ 
\\
\bottomrule 
\end{tabular}
\end{scriptsize}
\end{table}

\end{minipage}
\begin{minipage}{0.34\textwidth}
\begin{figure}[H]
\vspace{10 pt}
	\centering

 	\includegraphics[width=0.95\textwidth]{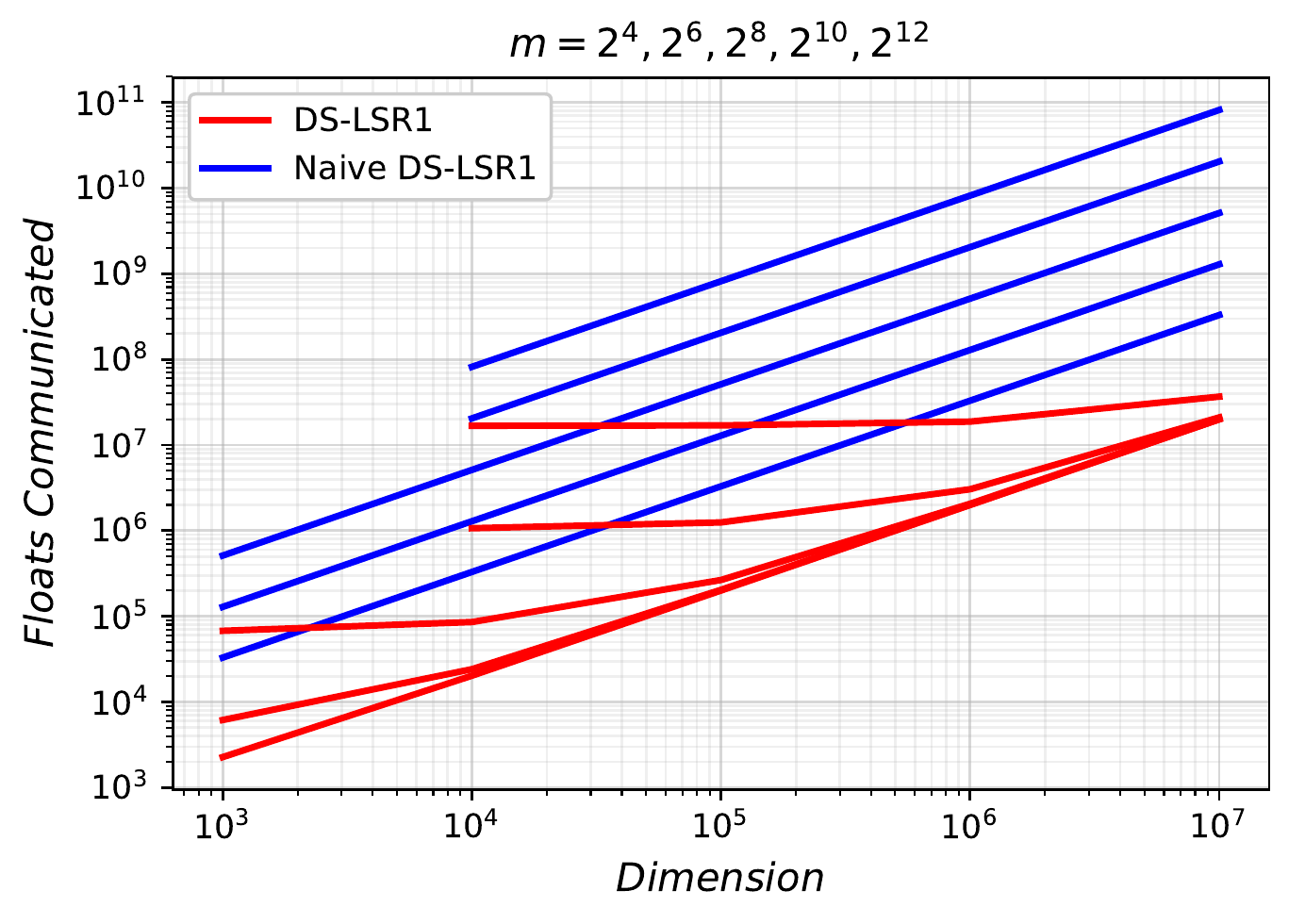}
 	\vspace{-5 pt}
 	\caption{Number of floats communicated per iteration for different dimension $d$ and memory size $m$. \label{fig:float_comm}}
\end{figure}
\end{minipage}

\section{Numerical Experiments}
\label{sec:num_res}
The goals of this section are threefold: $(1)$ To illustrate the scaling properties of the method and compare it to the naive implementation (Figures \ref{fig:weak} \& \ref{fig:strong}); $(2)$ To deconstruct the main computational elements of the method and show how they scale in terms of memory (Figure \ref{fig:comp_scaling}); and $(3)$ To illustrate the performance of DS-LSR1 on a neural network training task (Figure \ref{fig:perf}). We should note upfront that the goal of this section is not to achieve state-of-the-art performance and compare against algorithms that can achieve this, rather to show that the method is communication efficient and scalable.\footnote{\scriptsize{All algorithms were implemented in Python (PyTorch library), using the MPI for Python distributed environment. The experiments were conducted on XSEDE clusters using GPU nodes. Each physical node includes 4 K80 GPUs, and each MPI process is assigned to a distinct GPU. Code available at: \href{https://github.com/OptMLGroup/DSLSR1}{https://github.com/OptMLGroup/DSLSR1}.}}

\subsection{Scaling}

%
\paragraph{Weak Scaling} We considered two different types of networks: $(1)$ \texttt{Shallow} (one hidden layer), and $(2)$ \texttt{Deep} (7 hidden layers), and for each varied the number of neurons in the layers (MNIST dataset \cite{lecun1998gradient}, memory $m=64$). Figure \ref{fig:weak} shows the time per iteration for DS-LSR1 for different number of variables and batch sizes.
\begin{figure}[]
	\centering
  	\includegraphics[width=0.45\textwidth]{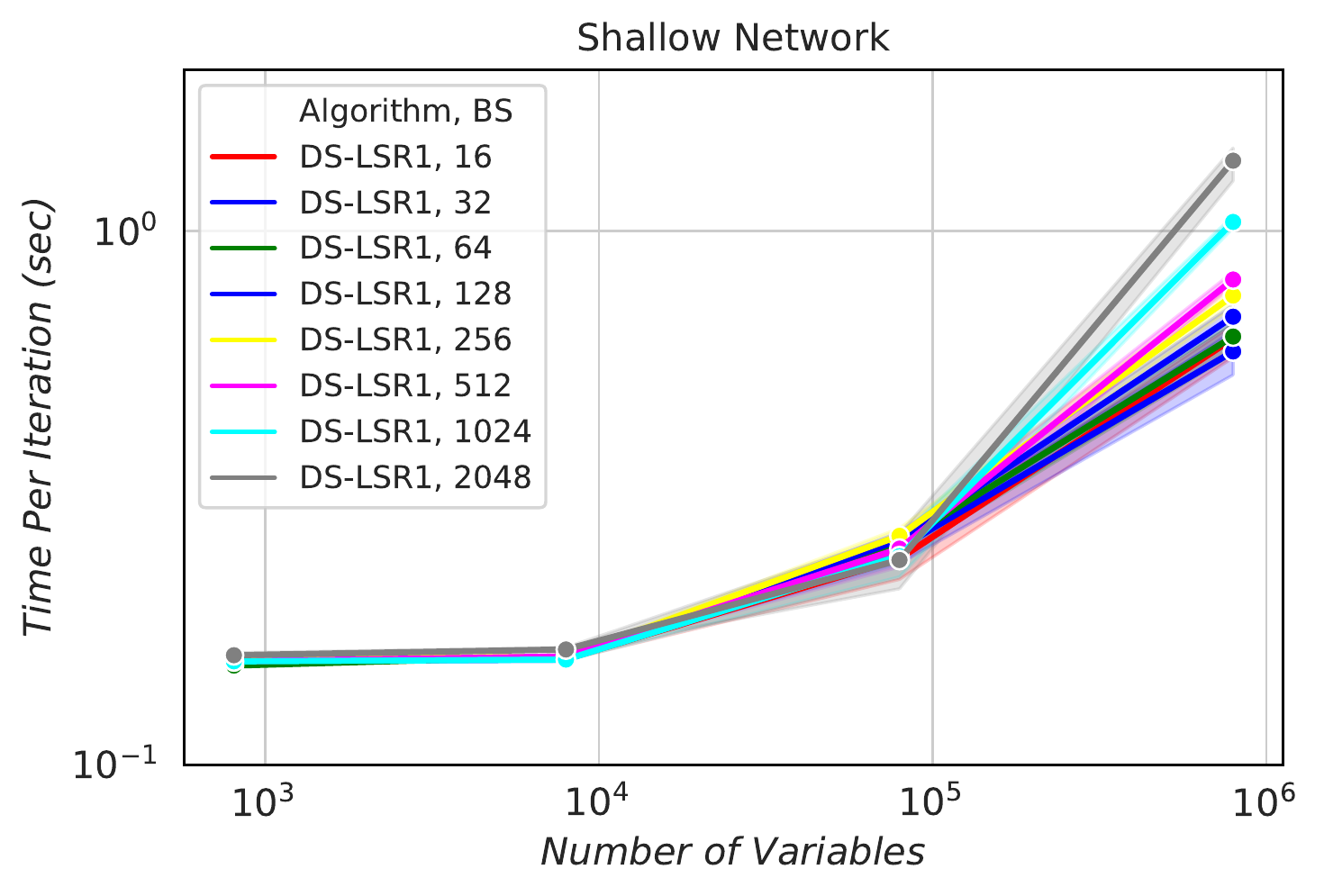}
 	\includegraphics[width=0.45\textwidth]{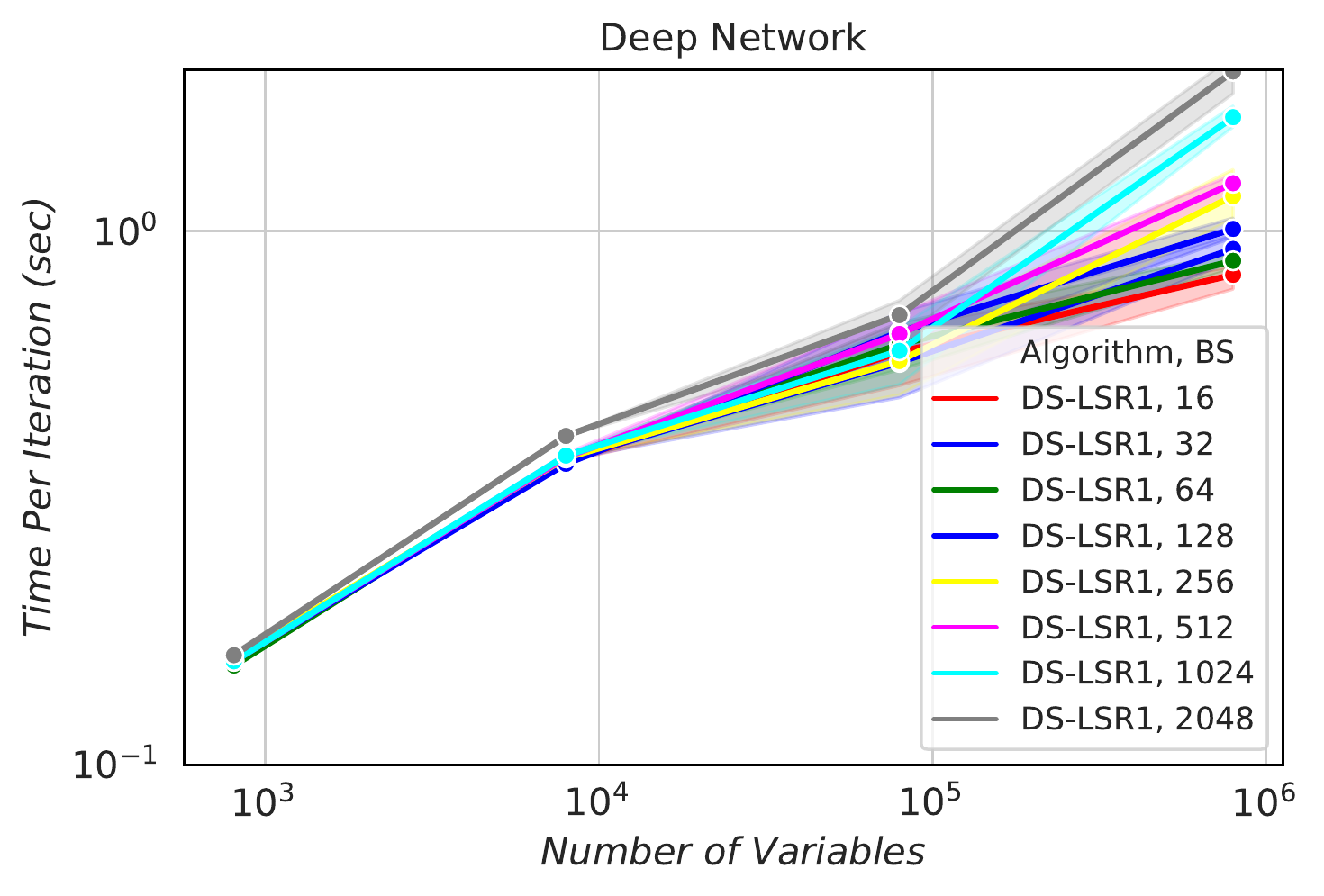}
    \vspace{-5 pt}
	\caption{\textbf{Weak Scaling:} Time/iteration (sec) vs \# of variables; \texttt{Shallow} (left), \texttt{Deep} (right).
	\label{fig:weak}}
\end{figure}
\paragraph{Strong Scaling} We fix the problem size (LeNet, CIFAR10, $d=62006$ \cite{lecun1998gradient}), vary the number of nodes and measure the speed-up achieved. Figure \ref{fig:strong} illustrates the relative speedup (normalized speedup of each method with respect to the performance of that method on a single node) of the DS-LSR1 method and the naive variant for  $m=256$. The DS-LSR1 method achieves near linear speedup as the number of nodes increases, and the speedup is better than that of the naive approach. We should note that the times of our proposed method are lower than the respective times for the naive implementation. The reasons for this are: $(1)$ DS-LSR1 is inverse free, and $(2)$ the amount of information communicated is significantly smaller.
\begin{figure}[]
	\centering
	\includegraphics[width=0.45\textwidth]{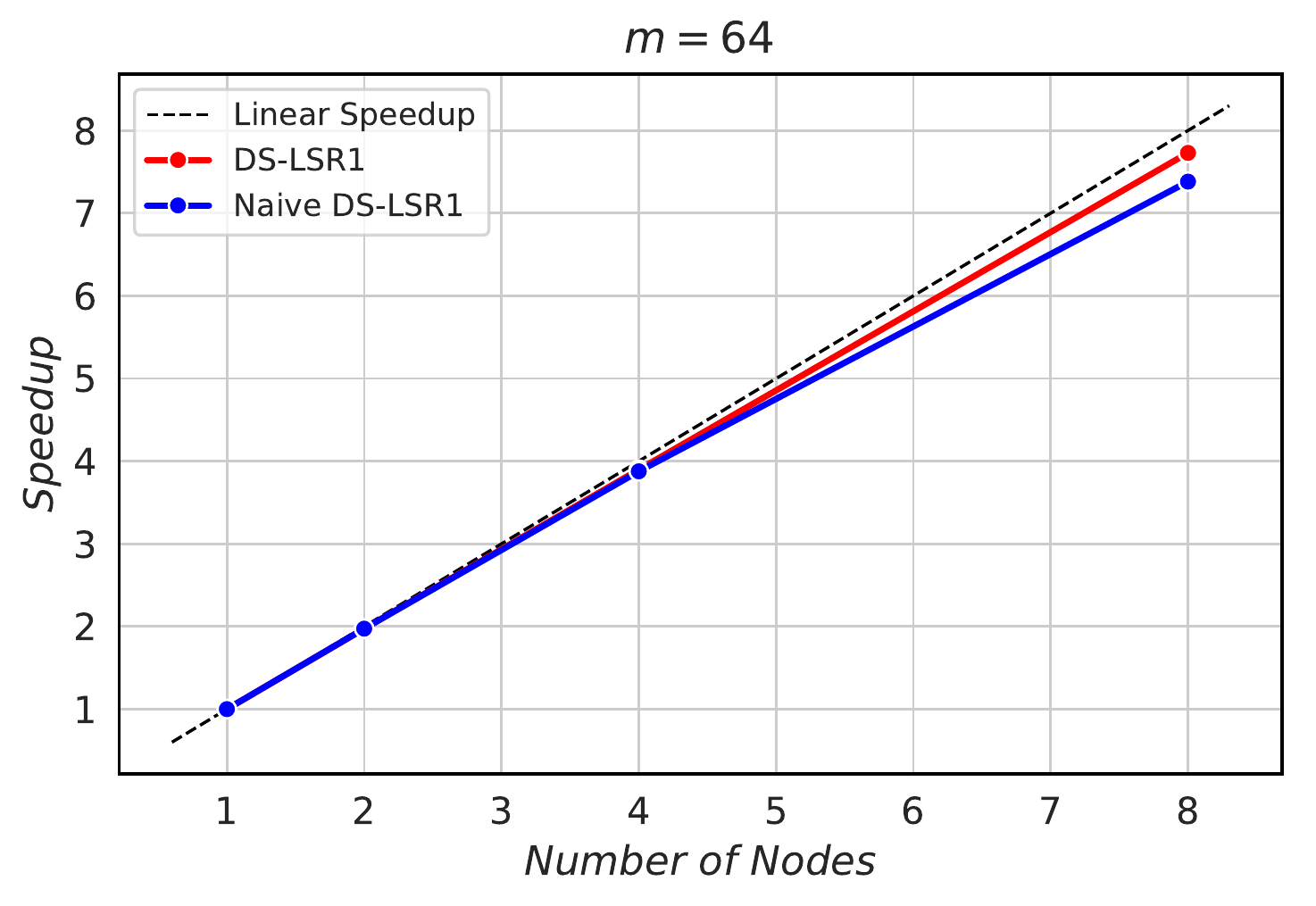}
  	\includegraphics[width=0.45\textwidth]{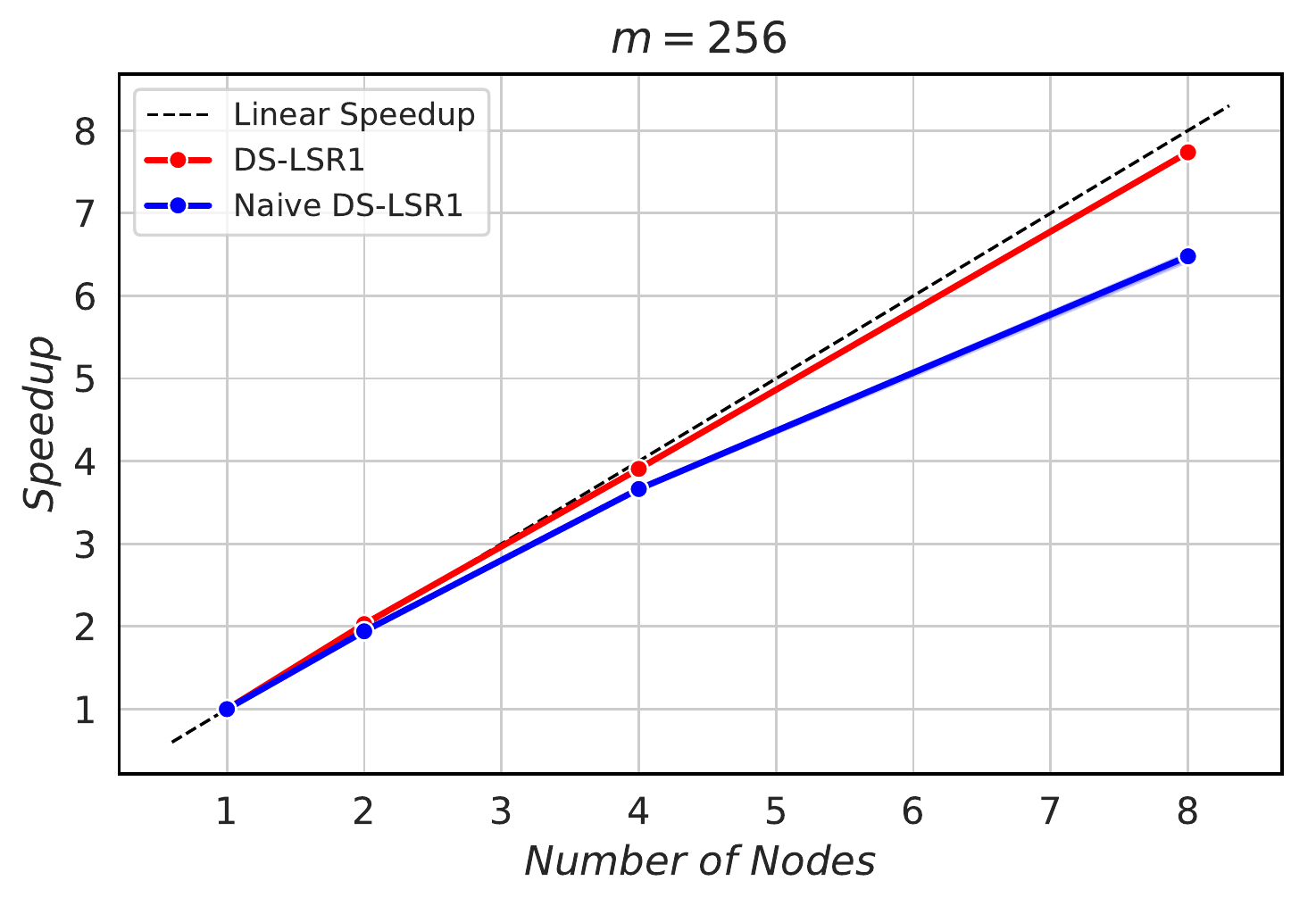}
    \vspace{-5 pt}
	\caption{\textbf{Strong Scaling:} Relative speedup vs \# of nodes. \label{fig:strong}}
\end{figure}

\paragraph{Scaling of Different Components of DS-LSR1} We deconstruct the main components of the DS-LSR1 method and illustrate the scaling (per iteration) with respect to memory size. Figure \ref{fig:comp_scaling} shows the scaling for: $(1)$ reduce time; $(2)$ total time; $(3)$ CG time; $(4)$ time to sample $S$, $Y$ pairs. For all these plots, we ran $10$ iterations, averaged the time and also show the variability. As is clear, our proposed method has lower times for all components of the algorithm. We attribute this to the aforementioned reasons. 

\begin{figure}[]
	\centering
  	\includegraphics[width=0.44\textwidth]{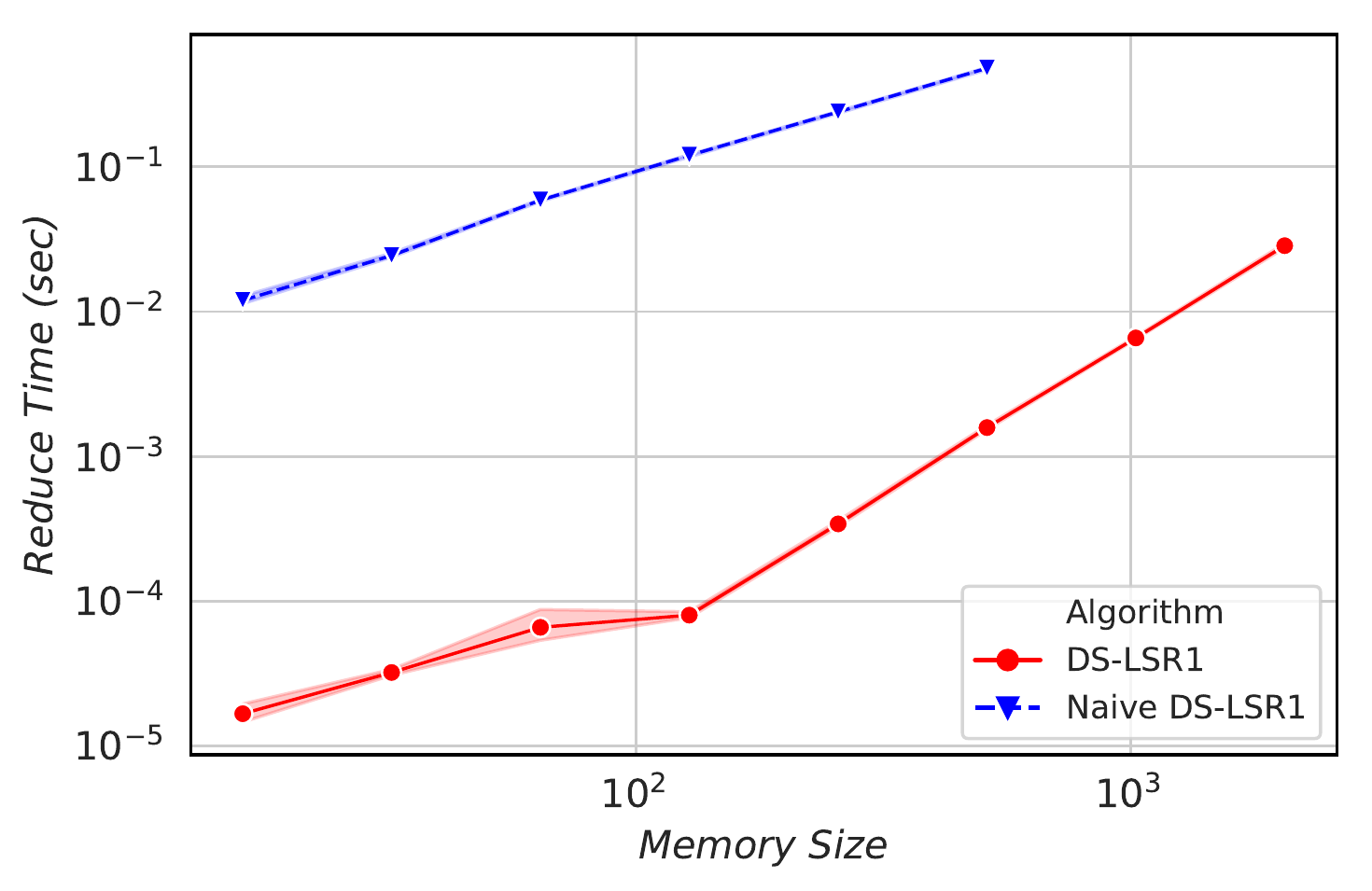}
 	\includegraphics[width=0.44\textwidth]{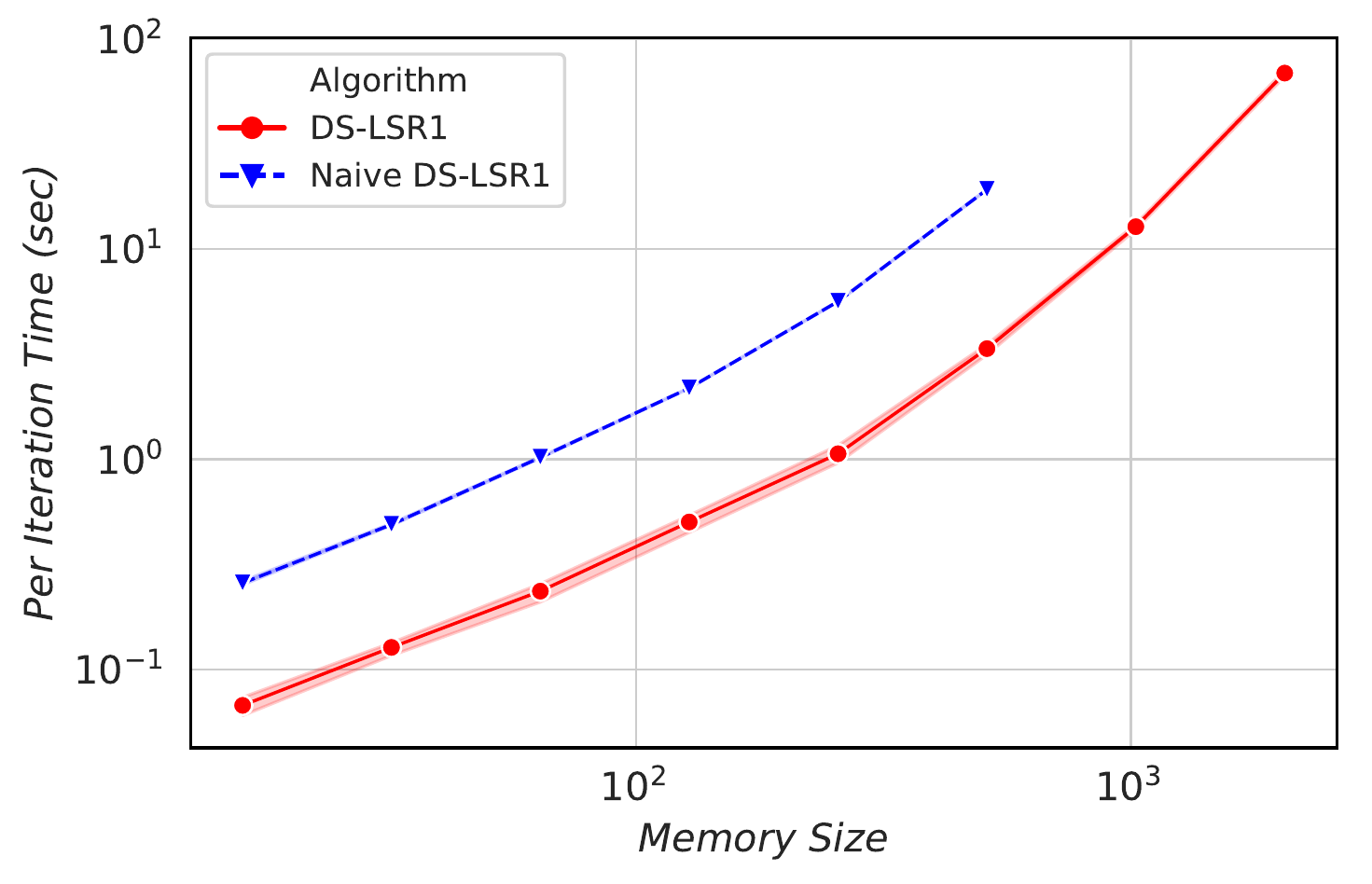} 
	
  	\includegraphics[width=0.44\textwidth]{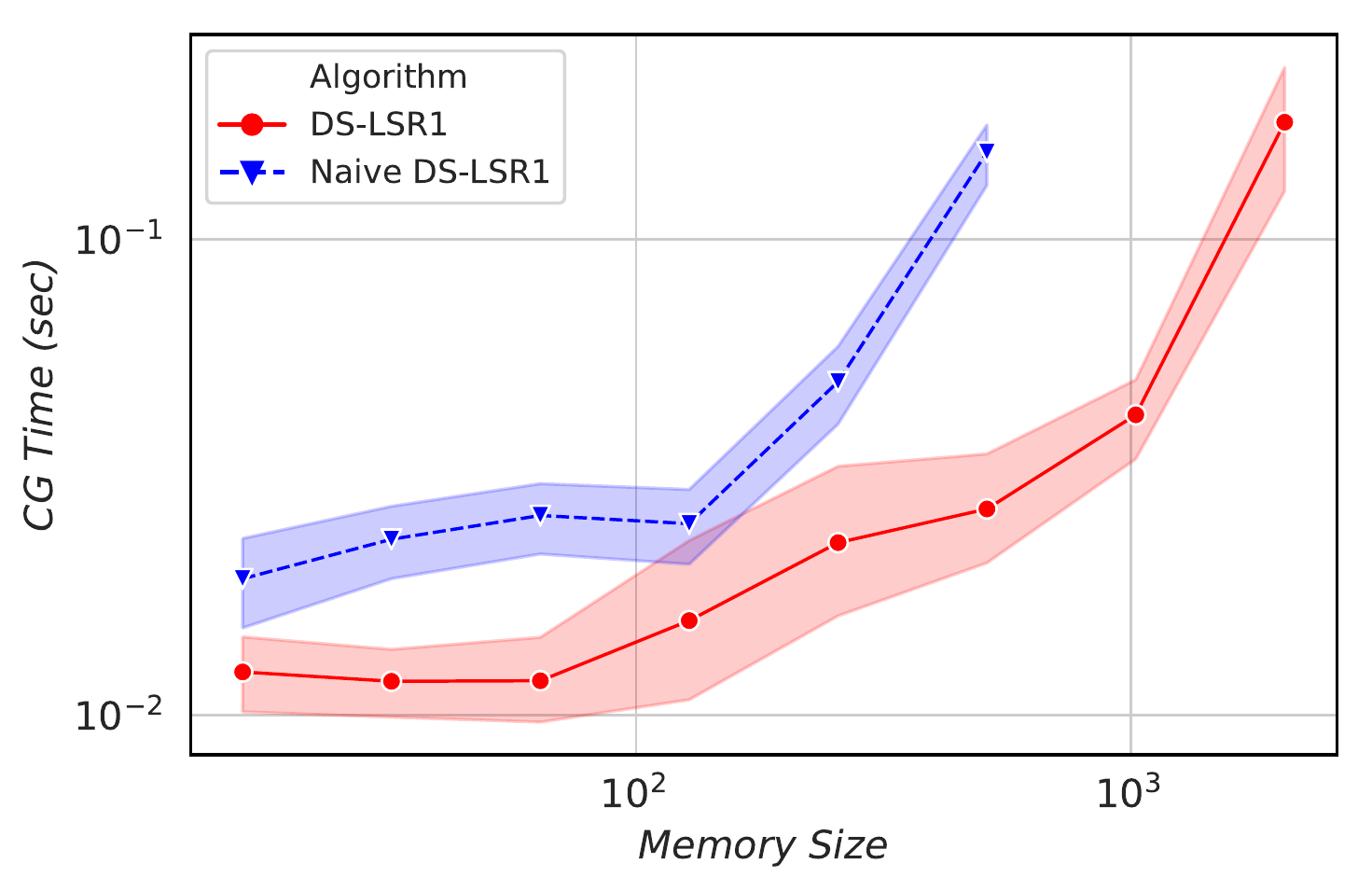}
  	\includegraphics[width=0.44\textwidth]{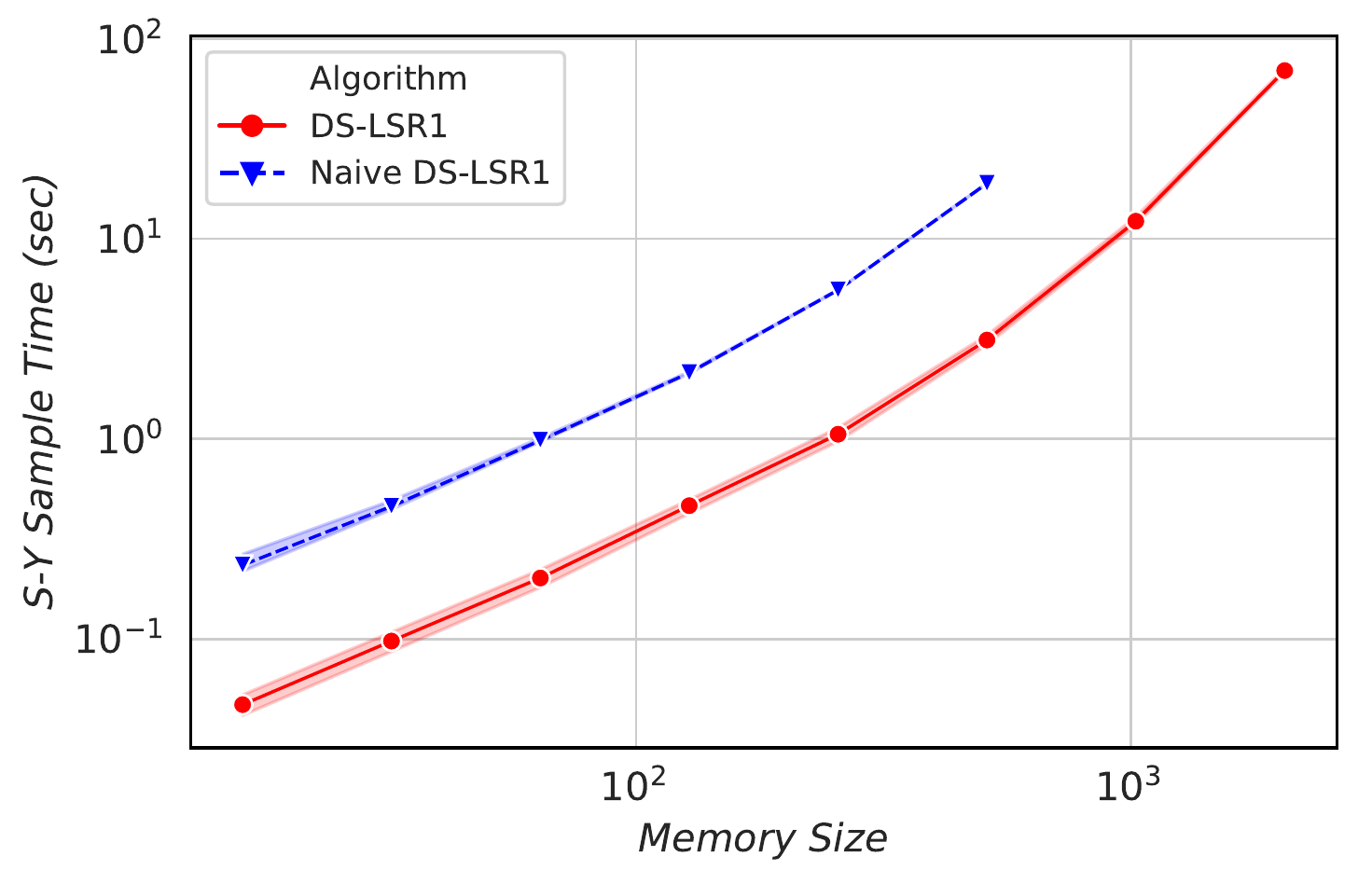}
    \vspace{-5 pt}
	\caption{Time (sec) for different components of DS-LSR1 with respect to memory. \label{fig:comp_scaling}}
\end{figure}


\subsection{Performance of DS-LSR1}

In this section, we show the performance of DS-LSR1 on a neural network training task; LeNet \cite{lecun1998gradient}, CIFAR10, $n=50000$, $d=62006$, $m=256$.  Figure \ref{fig:perf} illustrates the training accuracy in terms of wall clock time and amount of data (GB) communication (left and center plots), for different number of nodes. As expected, when using larger number of compute nodes training is faster. Similar results were obtained for testing accuracy. We also plot the performance of the naive implementation (dashed lines) in order to show that: $(1)$ the accuracy achieved is comparable, and $(2)$ one can train faster using our proposed method.

\begin{figure}[]
		\centering
			\includegraphics[width=0.44\textwidth]{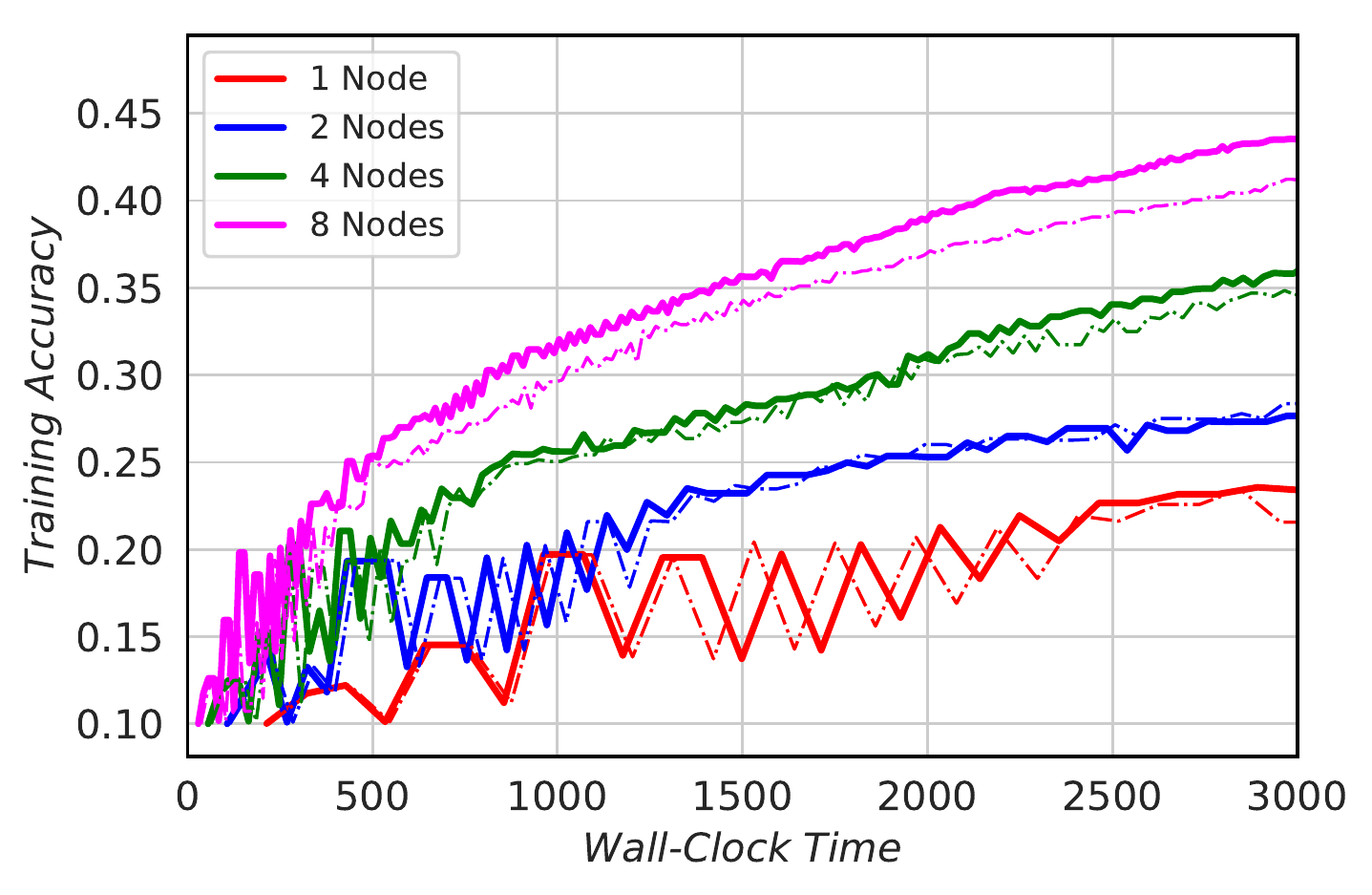}
			\includegraphics[width=0.44\textwidth]{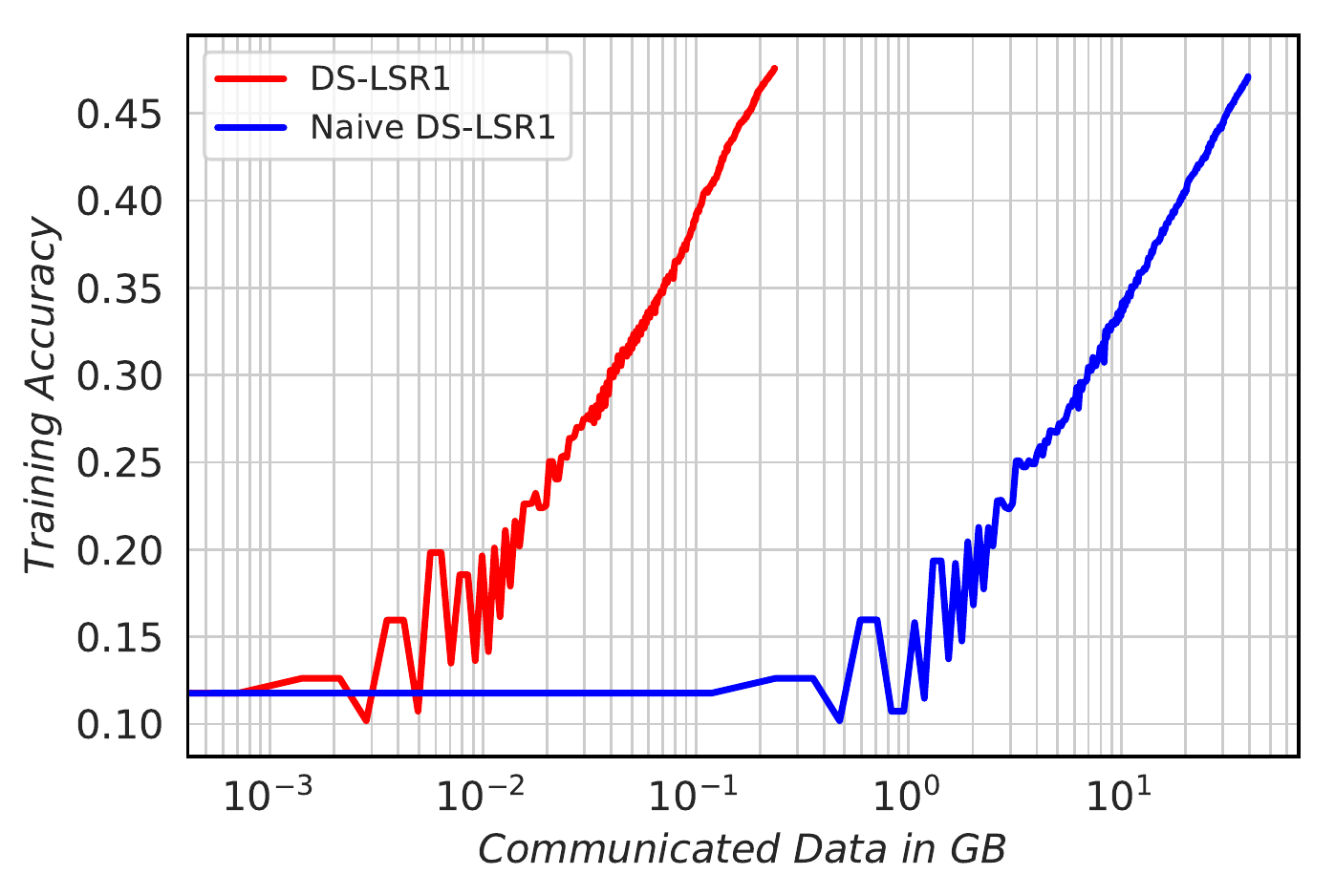}
			
		    \includegraphics[width=0.44\textwidth]{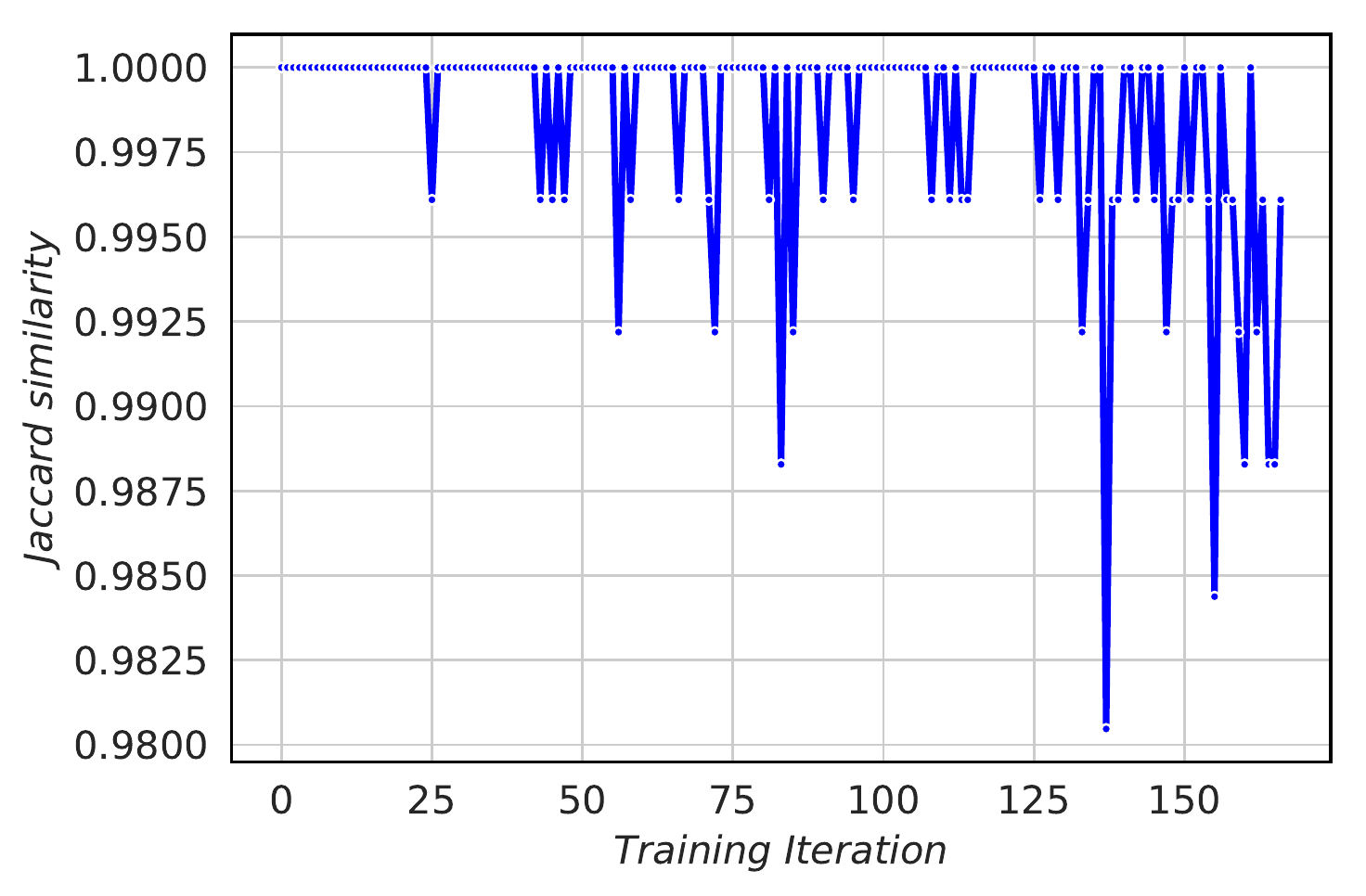}
		    \vspace{-5 pt}
		\caption{Performance of DS-LSR1 on CIFAR10 dataset with different number of nodes.}
		\label{fig:perf}
\end{figure}

Finally, we show that the curvature pairs chosen by our approach are almost identical to those chosen by the naive approach even though we use an approximation (via sketching) when checking the SR1 condition. Figure \ref{fig:perf} (right plot), shows the Jaccard similarity for the sets of curvature pairs selected by the methods; the pairs are almost identical, with differences on a few iterations. \vspace{-10 pt}

\section{Final Remarks}
\label{sec:fin_rem}

This paper describes a scalable distributed implementation of the sampled LSR1 method which is communication-efficient, has favorable work-load balancing across nodes and that is matrix-free and inverse-free. The method leverages the compact representation of SR1 matrices and uses sketching techniques to drastically reduce the amount of data communicated at every iteration as compared to a naive distributed implementation. The DS-LSR1 method scales well in terms of both the dimension of the problem and the number of data points.

\subsubsection*{Acknowledgements} This work was partially supported by the U.S. National Science Foundation, under award numbers NSF:CCF:1618717, NSF:CMMI:1663256 and NSF:CCF:1740796, and XSEDE Startup grant IRI180020.

%
%
%
{\small
\bibliographystyle{splncs04}
\bibliography{reference}
}

\appendix

\section{Theoretical Results and Proofs}
\label{app:theory}

In this section, we prove a theoretical result about the matrix $(M_k^{(j)})^{-1}$.

\begin{lemma}\label{lem:MinvRecursive}
The matrix $M_k^{(j+1)}$, for $j=0,\dots,m-1$, has the form:
\begin{equation}\label{eq:Mrec}
M_k^{(j+1)}=
\left[
\begin{array}{c:c}
M_k^{(j)} & u \\
\hdashline
v^T & c
\end{array}
\right],
\end{equation}
where $v^T = s_{k,j+1}^TY_{k,1:l}$ and $l \leq j$, $u =v$ and  $c =s_{k,j+1}^T y_{k,j+1}$, and is nonsingular. Moreover, its inverse can be calculated as following:
\begin{equation}\label{eq:MinvRec}
(M_k^{(j+1)})^{-1}=
\left[
\begin{array}{c:c}
(M_k^{(j)})^{-1}+\zeta(M_k^{(j)})^{-1}uv^T(M_k^{(j)})^{-1} & -\zeta(M_k^{(j)})^{-1} u \\
\hdashline
- \zeta v^T(M_k^{(j)})^{-1} & \zeta
\end{array}
\right]
\end{equation}
where $\zeta = \dfrac{1}{c - v^T(M_k^{(j)})^{-1}u}$.
\end{lemma}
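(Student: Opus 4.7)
The proof naturally splits into three pieces: establishing the block structure \eqref{eq:Mrec}, deriving the inverse formula \eqref{eq:MinvRec}, and verifying nonsingularity. I would attack them in that order.

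First, I would verify \eqref{eq:Mrec} directly from the definition of $M_k$ given in \eqref{eq:hessFreeD} (specialized to $B_k^{(0)} = 0$, so $M_k = D_k + L_k + L_k^T$). Appending the $(j{+}1)$st accepted curvature pair $(s_{k,j+1},y_{k,j+1})$ to the current list extends $D_k^{(j)}$ by a single diagonal entry $c = s_{k,j+1}^\top y_{k,j+1}$, extends $L_k^{(j)}$ by a new bottom row whose entries are $s_{k,j+1}^\top y_{k,\ell}$ (i.e., $v^\top$), and extends $L_k^{\top(j)}$ by the transpose of that row as a new right column. Summing these three contributions gives exactly the block form \eqref{eq:Mrec} with $u = v$. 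This part is purely bookkeeping.

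For \eqref{eq:MinvRec}, I would invoke the standard block inverse / Schur complement identity for $\bigl[\begin{smallmatrix} A & B \\ C & D\end{smallmatrix}\bigr]$ with $A = M_k^{(j)}$, $B = u$, $C = v^\top$, $D = c$. Under the inductive hypothesis that $(M_k^{(j)})^{-1}$ exists and provided the scalar Schur complement $c - v^\top (M_k^{(j)})^{-1} u$ is nonzero, the formula produces precisely the block expression in \eqref{eq:MinvRec} with $\zeta$ equal to its reciprocal. One can then verify \eqref{eq:MinvRec} by a short direct multiplication against \eqref{eq:Mrec} to confirm the identity blockwise; I would include that check since it is self-contained.

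The main obstacle, and the interesting step, is proving nonsingularity, because it is the place where the SR1 acceptance test \eqref{eq:cond_slsr1} enters. I would argue by induction on $j$. The base case $M_k^{(1)} = s_{k,1}^\top y_{k,1}$ is a scalar, nonzero by \eqref{eq:cond_slsr1} applied with $B_k^{(0)} = 0$. For the inductive step, the compact representation \eqref{eq:compactFormVec1_1} applied to the first $j$ accepted pairs gives $B_k^{(j)} s_{k,j+1} = Y_{k,1:j} (M_k^{(j)})^{-1} Y_{k,1:j}^\top s_{k,j+1}$, which together with $v = Y_{k,1:j}^\top s_{k,j+1}$ yields the key identity
\begin{equation*}
c - v^\top (M_k^{(j)})^{-1} u \;=\; s_{k,j+1}^\top\bigl(y_{k,j+1} - B_k^{(j)} s_{k,j+1}\bigr).
\end{equation*}
By \eqref{eq:cond_slsr1} (which is exactly the criterion for accepting $(s_{k,j+1}, y_{k,j+1})$) the right-hand side is bounded below in magnitude by $\eta \|s_{k,j+1}\|\,\|y_{k,j+1} - B_k^{(j)} s_{k,j+1}\|$ and in particular is nonzero. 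Hence $\zeta$ is well defined, \eqref{eq:MinvRec} is a valid inverse, and nonsingularity of $M_k^{(j+1)}$ follows, closing the induction.
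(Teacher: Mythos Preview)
Your proposal is correct and, for the block structure \eqref{eq:Mrec} and the inverse formula \eqref{eq:MinvRec}, follows essentially the same route as the paper: the paper dismisses \eqref{eq:Mrec} as ``trivial'' and then derives \eqref{eq:MinvRec} by explicit Gaussian elimination on the augmented matrix $[\,M_k^{(j+1)}\mid I\,]$, which is just the row-reduction version of the Schur-complement identity you invoke.

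The one genuine difference is the nonsingularity argument. The paper does not prove it: it simply cites \cite{byrd1994} for the fact that $M_k^{(j+1)}$ is nonsingular and symmetric, and then proceeds with the elimination assuming the pivot $c - v^\top (M_k^{(j)})^{-1}u$ is nonzero. You instead give a self-contained inductive proof, identifying the Schur complement with $s_{k,j+1}^\top\bigl(y_{k,j+1} - B_k^{(j)} s_{k,j+1}\bigr)$ via the compact form \eqref{eq:compactFormVec1_1} and then invoking the SR1 acceptance test \eqref{eq:cond_slsr1} to conclude it is nonzero. This is a strict improvement in the context of the paper: it makes explicit the connection between the acceptance criterion and the well-definedness of $\zeta$, which is precisely the mechanism the algorithm relies on, and it removes the external dependence on \cite{byrd1994}.
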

\begin{proof}
It is trivial to show that $M_k^{(j+1)}$ shown in \eqref{eq:Mrec} is equivalent to the corresponding matrix in \eqref{eq:hessFreeD}. Moreover, the second part of the lemma follows immediately from the fact that $M_k^{(i+1)}$ is itself non-singular and symmetric as shown in \cite{byrd1994}.
Lets consider the following matrix $M_k^{(i+1)}$:
\begin{equation}\label{eq:Mrec2}
M_k^{(j+1)}=
\left[
\begin{array}{c:c}
M_k^{(j)} & u \\
\hdashline
v^T & c
\end{array}
\right]
\end{equation}
We know that $M_k^{(i)}$ is invertible, and in the following by simple linear algebra, we calculate the inverse of $M_k^{(i+1)}$: 
\begin{align*}\label{eq:inv}
\left[
\begin{array}{c:c|c:c}
M_k^{(j)} & u & I & 0 \\
\hdashline
v^T & c &0 & 1
\end{array}
\right] &	\Rightarrow \left[
\begin{array}{c:c|c:c}
I & (M_k^{(j)})^{-1}u & (M_k^{(j)})^{-1} & 0 \\
\hdashline
v^T & c &0 & 1
\end{array}
\right]\\
&\Rightarrow
\left[
\begin{array}{c:c|c:c}
I & (M_k^{(j)})^{-1}u & (M_k^{(j)})^{-1} & 0 \\
\hdashline
0 & c - v^T(M_k^{(j)})^{-1}u &-v^T(M_k^{(j)})^{-1} & 1
\end{array}
\right]\\
&\Rightarrow
\left[
\begin{array}{c:c|c:c}
I & (M_k^{(j)})^{-1}u & (M_k^{(j)})^{-1} & 0 \\
\hdashline
0 & 1 &\dfrac{-v^T(M_k^{(j)})^{-1}}{c - v^T(M_k^{(j)})^{-1}u} & \dfrac{1}{c - v^T(M_k^{(j)})^{-1}u}
\end{array}
\right]\\
&\Rightarrow 
\left[
\begin{array}{c:c|c:c}
I & 0 & (M_k^{(j)})^{-1}+ \dfrac{(M_k^{(j)})^{-1}uv^T(M_k^{(j)})^{-1}}{c - v^T(M_k^{(j)})^{-1}u} & \dfrac{-(M_k^{(j)})^{-1}u}{c - v^T(M_k^{(j)})^{-1}u} \\
\hdashline
0 & 1 &\dfrac{-v^T(M_k^{(j)})^{-1}}{c - v^T(M_k^{(j)})^{-1}u} & \dfrac{1}{c - v^T(M_k^{(j)})^{-1}u}
\end{array}
\right]\\
&\Rightarrow 
\left[
\begin{array}{c:c|c:c}
I & 0 & (M_k^{(j)})^{-1}+ \zeta(M_k^{(j)})^{-1}uv^T(M_k^{(j)})^{-1} & -\zeta(M_k^{(j)})^{-1}u \\
\hdashline
0 & 1 &-\zeta v^T(M_k^{(j)})^{-1} & \zeta
\end{array}
\right]
\end{align*}
The last line is by putting $\zeta = \dfrac{1}{c - v^T(M_k^{(j)})^{-1}u}$.

\end{proof}

Lemma \ref{lem:MinvRecursive} describes a recursive  method for computing $(M_k^{(j)})^{-1} \in \mathbb{R}^{j \times j}$, for $j=1,\dots,m$. Specifically, one can calculate $(M_k^{(j+1)})^{-1}$ using $(M_k^{(j)})^{-1}$. We should note, that the first matrix $(M_k^{(1)})^{-1}$ is simply a number. Overall, this procedure allows us to compute $(M_k^{(j)})^{-1}$ without explicitly computing an inverse.

\section{Additional Algorithm Details}
\label{app:algs}
In this section, we present additional details about the S-LSR1 and DS-LSR1 algorithms discussed in the Sections \ref{sec:slsr1} and \ref{sec:ds-lsr1}. 

\subsection{CG Steihaug Algorithm - Serial}
\label{sec:CG_serial}
In this section, we describe CG-Steihaug Algorithm \cite{nocedal_book} which is used for computing the search direction $p_k$. 
\begin{algorithm}[H]
\small 
\caption{CG-Steihaug (Serial)}
  \label{alg:CG-Steihaug(Serial)}
 {\bf Input:} $\epsilon$ (termination tolerance), $\nabla F(w_k)$ (current gradient).

  \begin{algorithmic}[1]
    \State Set $z_0 =0$, $r_0= \nabla F(w_k)$, $d_0 = -r_0$
   \If{ $\| r_0\| < \epsilon$}
    \State {\bf return} $p_k = z_0 = 0$
  \EndIf
  \For {$j=0,1,2,...$}
     \If{ $d_j^T\textcolor{magenta}{B_kd_j} \leq 0 $}
        \State Find $\tau \geq 0$ such that $p_k = z_j + \tau d_j$ minimizes $m_k(p_k)$ and satisfies $\|p_k\| = \Delta_k$
        \State {\bf return} $p_k $
    \EndIf
    \State Set $\alpha_j = \dfrac{r_j^Tr_j}{d_j^TB_kd_j}$ and $z_{j+1}= z_j + \alpha_jd_j$
     \If{ $\| z_{j+1}\| \geq \Delta_k$}
        \State Find $\tau\geq 0$ such that $p_k = z_j + \tau d_j$ and satisfies $\|p_k\| = \Delta_k$
        \State {\bf return} $p_k $
    \EndIf
    \State Set $r_{j+1}= r_j + \alpha_jB_kd_j$
    \If{ $\| r_{j+1}\| < \epsilon_k$}
        \State {\bf return} $p_k = z_{j+1}$
    \EndIf 
    \State Set $\beta_{j+1} =\dfrac{r_{j+1}^Tr_{j+1}}{ r_{j}^Tr_{j}} $ and $d_{j+1} =-r_{j+1} + \beta_{j+1}d_j $
\EndFor
  \end{algorithmic}
\end{algorithm}

\clearpage

\subsection{CG Steihaug Algorithm - Distributed}
\label{sec:CG_dist}
In this section, we describe a distributed variant of CG Steihaug algorithm that is used as a subroutine of the DS-LSR1 method. The manner in which Hessian vector products are computed was discussed in Section \ref{sec:ds-lsr1}. 
\begin{algorithm}[H]
{\small 
\caption{CG-Steihaug (Distributed)}
  \label{alg:CG-Steihaug(Distributed)}
 {\bf Input:} $\epsilon$ (termination tolerance), $\nabla F(w_k)$ (current gradient), $M_k^{-1}$. 

\textbf{Master Node:} \hfill \textbf{Worker Nodes (\pmb{$i=1,2,\dots,\mathcal{K}$}):} 
  \begin{algorithmic}[1]
  \State Set $z_0 =0$, $r_0= \nabla F(w_k)$, $d_0 = -r_0$
   \If{ $\| r_0\| < \epsilon_k$}
    \State {\bf return} $p_k = z_0 = 0$
  \EndIf
  \For {$j=0,1,2,...$}
  \State {\color{green!40!black}{\it \textbf{Broadcast:}}} $d_j$, $M_k^{-1}$ \hfill \textcolor{NavyBlue}{$\pmb{\longrightarrow}$} \hfill Compute  $M_k^{-1}(Y_{k}^{i})^Td_j$
  \State {\color{orange!50!black}{\it \textbf{Reduce:}}} $M_k^{-1}(Y_{k}^{i})^Td_j$ to $ M_k^{-1}Y_k^Td_j$ \hfill \textcolor{NavyBlue}{$\pmb{\longleftarrow}$} \hfill $\ \ \ $ 
   \State {\color{green!40!black}{\it \textbf{Broadcast:}}} $ M_k^{-1}Y_k^Td_j$ \hfill \textcolor{NavyBlue}{$\pmb{\longrightarrow}$} \hfill Compute  $Y_k^{i} M_k^{-1}Y_k^Td_j$
  \State {\color{orange!50!black}{\it \textbf{Reduce:}}} $Y_k^iM_k^{-1}Y_k^Td_j$ to $B_kd_j = Y_k M_k^{-1}Y_k^Td_j$ \hfill \textcolor{NavyBlue}{$\pmb{\longleftarrow}$} \hfill $\ \ \ $
     \If{ $d_j^TB_kd_j \leq 0 $}
        \State Find $\tau \geq 0$ such that $p_k = z_j + \tau d_j$ minimizes $m_k(p_k)$ and satisfies $\|p_k\| = \Delta_k$
        \State {\bf return} $p_k $
    \EndIf
    \State Set $\alpha_j = \dfrac{r_j^Tr_j}{d_j^TB_kd_j}$ and $z_{j+1}= z_j + \alpha_jd_j$
     \If{ $\| z_{j+1}\| \geq \Delta_k$}
        \State Find $\tau\geq 0$ such that $p_k = z_j + \tau d_j$ and satisfies $\|p_k\| = \Delta_k$
        \State {\bf return} $p_k $
    \EndIf
    \State Set $r_{j+1}= r_j + \alpha_jB_kd_j$
    \If{ $\| r_{j+1}\| < \epsilon_k$}
        \State {\bf return} $p_k = z_{j+1}$
    \EndIf 
    \State Set $\beta_{j+1} =\dfrac{r_{j+1}^Tr_{j+1}}{ r_{j}^Tr_{j}} $ and $d_{j+1} =-r_{j+1} + \beta_{j+1}d_j $
\EndFor
  \end{algorithmic}
  }
\end{algorithm}

\newpage

\subsection{Trust-Region Management Subroutine}
\label{sec:tr_alg}
In this section, we present the Trust-Region management subroutine $\Delta_{k+1} = \texttt{adjustTR}(\Delta_{k},\rho_k)$. See \cite{nocedal_book} for further details.

\begin{center}
\begin{minipage}{.75\linewidth}
\begin{algorithm}[H]
\caption{$\Delta_{k+1} = \texttt{adjustTR}(\Delta_{k},\rho_k,\eta_2,\eta_3,\gamma_1,\zeta_1,\zeta_2)$: Trust-Region management subroutine}
  \label{alg:tr_mgmt}
 {\bf Input:} $\Delta_k$ (current trust region radius), $0 \leq \eta_3 < \eta_2 < 1$, $\gamma_1 \in (0,1)$, $\zeta_1 > 1$, $\zeta_2 \in (0,1)$ (trust region parameters). 
  \begin{algorithmic}[1]
    \If {$\rho_k > \eta_2$}
        \If {$\|p_k\| \leq \gamma_1 \Delta_k$}
            \State Set $\Delta_{k+1} = \Delta_k$
        \Else
            \State Set $\Delta_{k+1} = \zeta_1 \Delta_k$
        \EndIf
    \ElsIf{$\eta_3 \leq \rho_k \leq \eta_2$}       \State Set         $\Delta_{k+1} =     \Delta_k$
    \Else
        \State $\Delta_{k+1} = \zeta_2 \Delta_k$
    \EndIf  
  \end{algorithmic}
\end{algorithm}
\end{minipage}
\end{center}

\subsection{Load Balancing}
In distributed algorithms, it is very important to have work-load balancing across nodes. In order for an algorithm to be scalable, every machine (worker) should have similar amount of assigned computation, and each machine should be equally busy. According to Amdahl's law \cite{rodgers1985improvements} if the parallel/distributed algorithm runs $t$ portion of  time only on one of the machines (e.g., the master node), the theoretical speedup (SU) is limited to at most 
\begin{equation}\label{adamhl}
SU \leq \dfrac{1}{t + \dfrac{(1-t)}{\mathcal{K}}}.  
\end{equation}

As is clear from Tables \ref{tbl:comm} and \ref{tbl:comp}, the DS-LSR1 method makes each machine almost equally busy, and as a result DS-LSR1 has a near linear speedup. On the other hand, in the naive DS-LSR1 approach the master node is significantly busier than the remainder of the nodes, and thus by Adamhl's law, the speedup will not be linear and is bounded above by \eqref{adamhl}. 

\subsection{Communication and Computation Details}
\label{sec:quant_details}

In this section, we present details about the quantities that are communicated and computed at every iteration of the distributed S-LSR1 methods. All the quantities below are in Tables \ref{tbl:comm} and \ref{tbl:comp}.

\begin{table}[H]
\caption{Details of quantities communicated and computed.}
\label{tbl:quant_details}
\centering
\begin{small}
\begin{tabular}{@{}ll@{}}\toprule
\textbf{Variable} & \textbf{Dimension} \\ \midrule
 $w_k$&  $d \times 1$
\\ \hdashline  
 $F(w_k), F_i(w_k)$ &  $1$
 \\ \hdashline  
 $\nabla F(w_k), \nabla F_i(w_k)$ &  $d \times 1$
\\ \hdashline
$p_k$ &  $d \times 1$
\\ \hdashline
 $S_k,S_{k,i}$&  $d \times m$
\\ \hdashline  
$Y_k,Y_{k,i}$&  $d \times m$ 
\\\hdashline  
$S_k^TY_{k,i},S_{k,i}^TY_{k,i}$&  $m \times m$ 
\\\hdashline 
 $M_k^{-1}$&  $m \times m$
 \\\hdashline  
 $B_kd$&  $d \times 1$
 \\\hdashline  
 $M_k^{-1}Y_{k,i}^Tp_k$ &  $m \times 1$
 \\\hdashline  
 $Y_{k,i}M_k^{-1}Y_k^Tp_k$ &  $d \times 1$
 \\\hdashline
 $M_k^{-1}$&  $m \times m$
\\  
\bottomrule 
\end{tabular}
\end{small}
\end{table}

\subsection{Floats Communicated per Iteration}
\label{sec:app_comm_graphs}

In this section, we should the number of floats communicated per iteration for DS-LSR1 and naive DS-LSR1 for different memory size and dimension.

\begin{figure}[H]
	\centering
 	\includegraphics[width=0.45\textwidth]{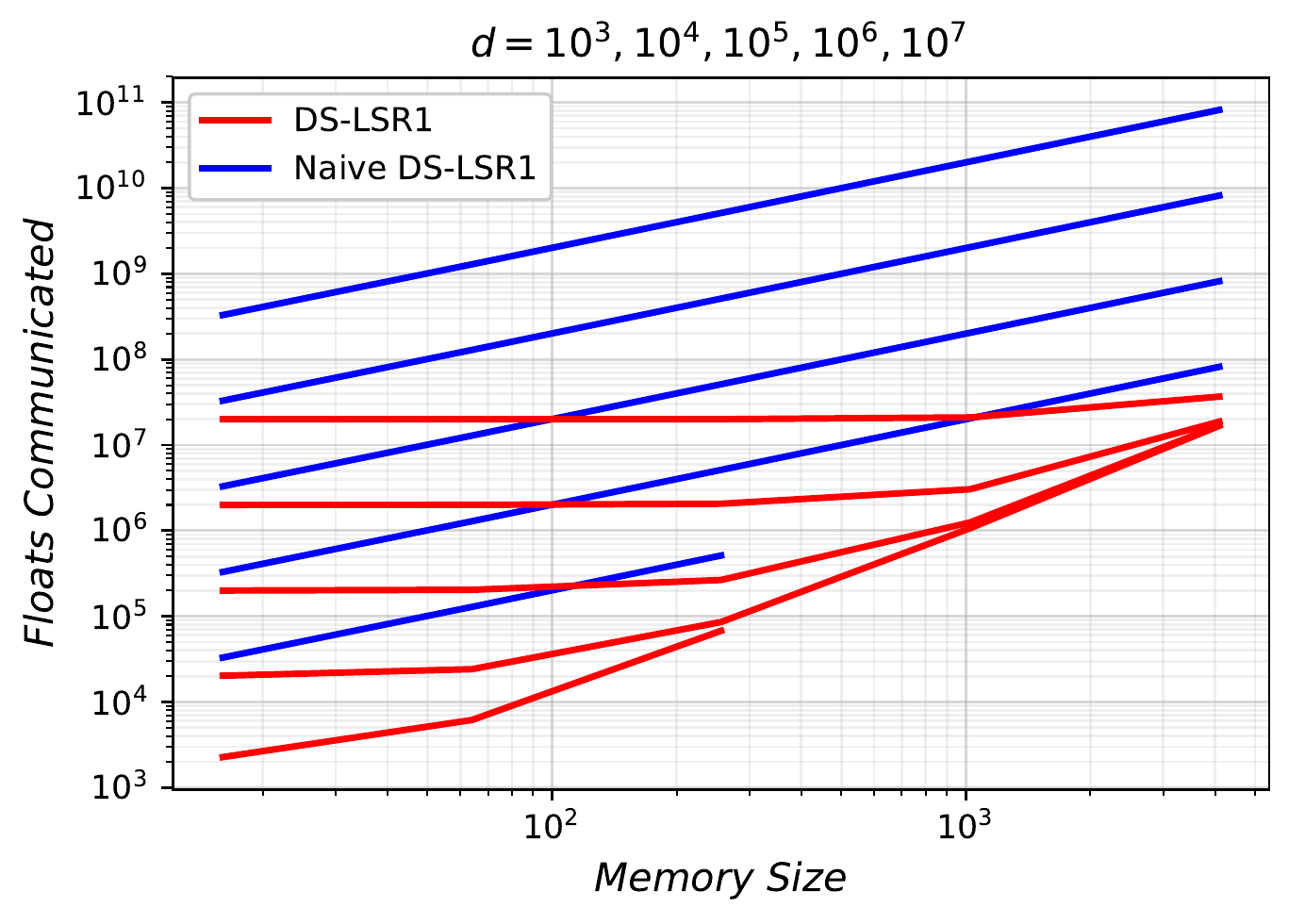}
 	\includegraphics[width=0.45\textwidth]{figs/floatNum_dim_fixedMmr.pdf}
 	\caption{Number of floats communicated ate every iteration for different dimension $d$ and memory size $m$. \label{fig:float_comm1}}
\end{figure}

\newpage

\newpage
\section{Additional Numerical Experiments and Experimental Details}
\label{app:NumericExper}

In this section, we present additional experiments and experimental details.

\subsection{Initial Hessian Approximation $B_k^{(0)}$}

In this section, we show additional results motivating the use of $B_k^{(0)}$. Figure \ref{SmalllvdNet1}, is identical to Figure \ref{SmalllvdNet}. Figure \ref{MedvdNet} shows similar results for a larger problem. See \cite{berahas2019sqn} for details about the problems.

\begin{figure}[]
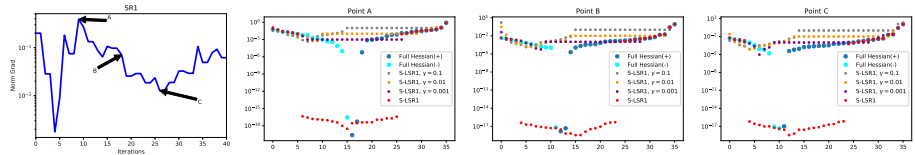

	\centering
	\includegraphics[trim=0.6cm 0.5cm 1.0cm 0.7cm,width=0.24\textwidth]{figs/normGradSR1_3pointsNewer.pdf}
 	\includegraphics[trim=0.6cm 0.7cm 1.0cm 0.7cm,width=0.24\textwidth]{figs/iter9nnSpectApr.pdf}
	\includegraphics[trim=0.6cm 0.7cm 1.0cm 0.7cm,width=0.24\textwidth]{figs/iter18nnSpectApr.pdf}
	\includegraphics[trim=0.6cm 0.7cm 1.0cm 0.7cm,width=0.24\textwidth]{figs/iter26nnSpectApr.pdf}
	\caption{Comparison of the eigenvalues of S-LSR1 for different $\gamma$ (@ A, B, C) for a \texttt{small} toy classification problem.\label{SmalllvdNet1}}
\end{figure}

\begin{figure}[]
	\centering
 
	\includegraphics[trim=0.6cm 0.5cm 1.0cm 0.7cm,width=0.24\textwidth]{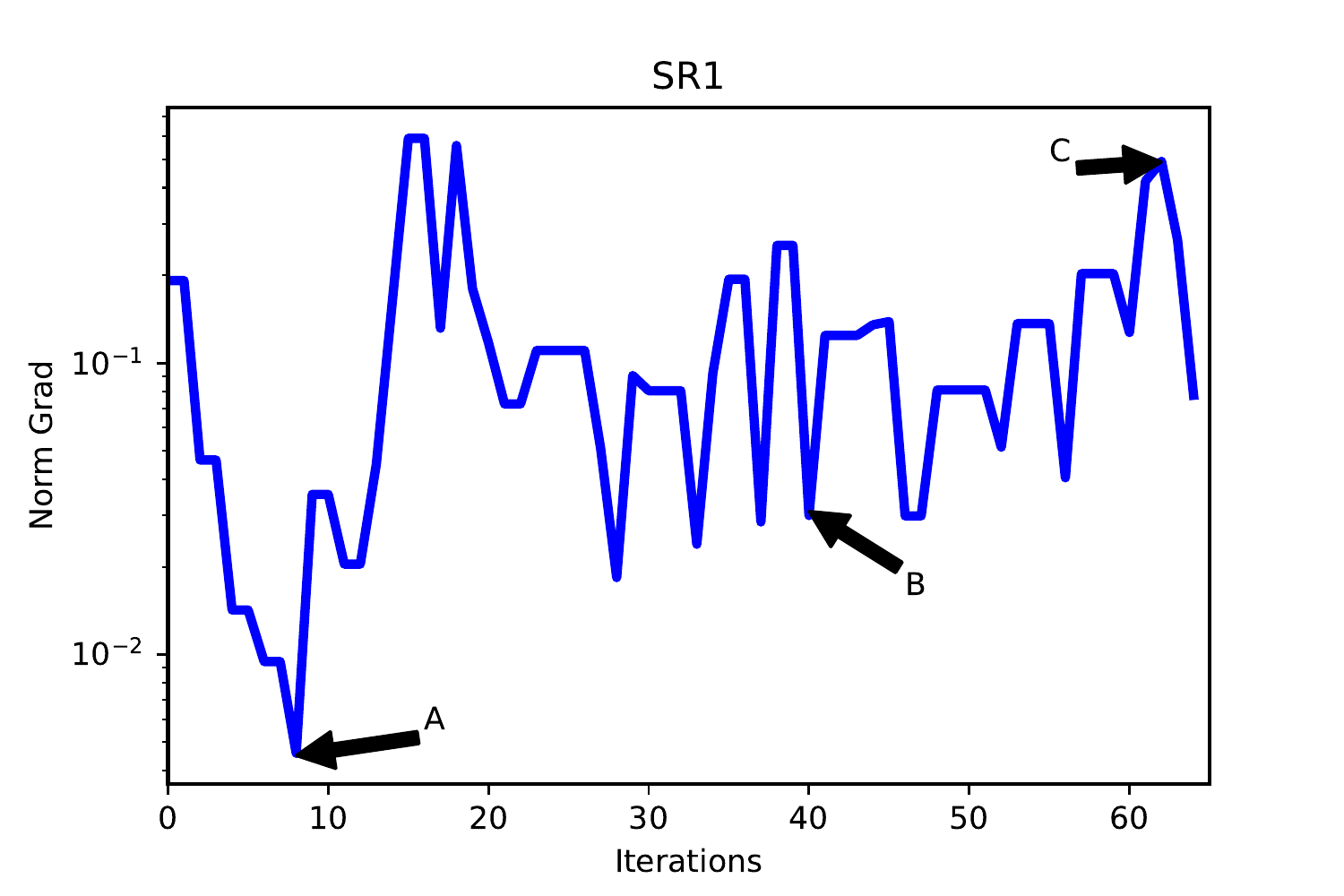}
 	\includegraphics[trim=0.6cm 0.7cm 1.0cm 0.7cm,width=0.24\textwidth]{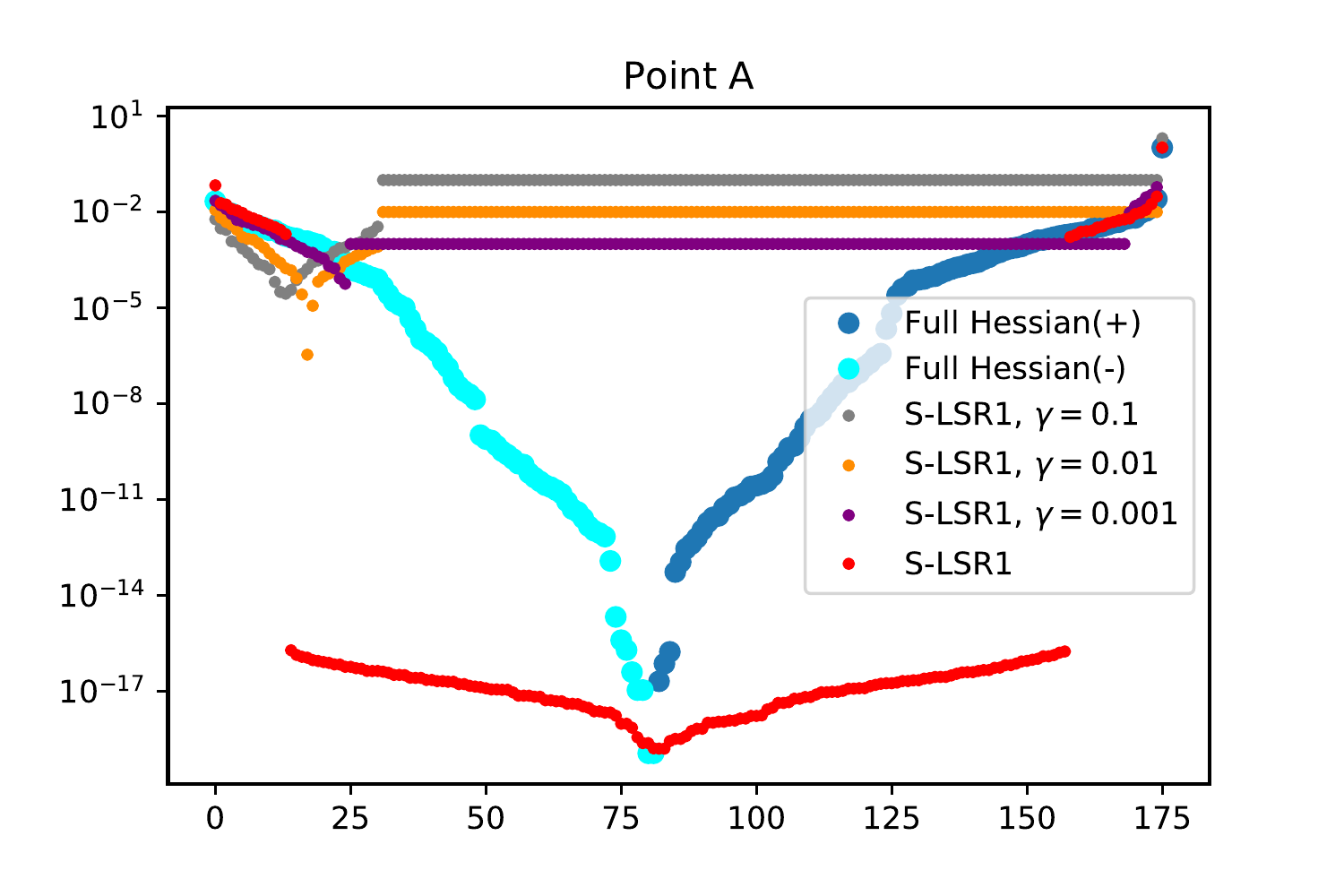}
	\includegraphics[trim=0.6cm 0.7cm 1.0cm 0.7cm,width=0.24\textwidth]{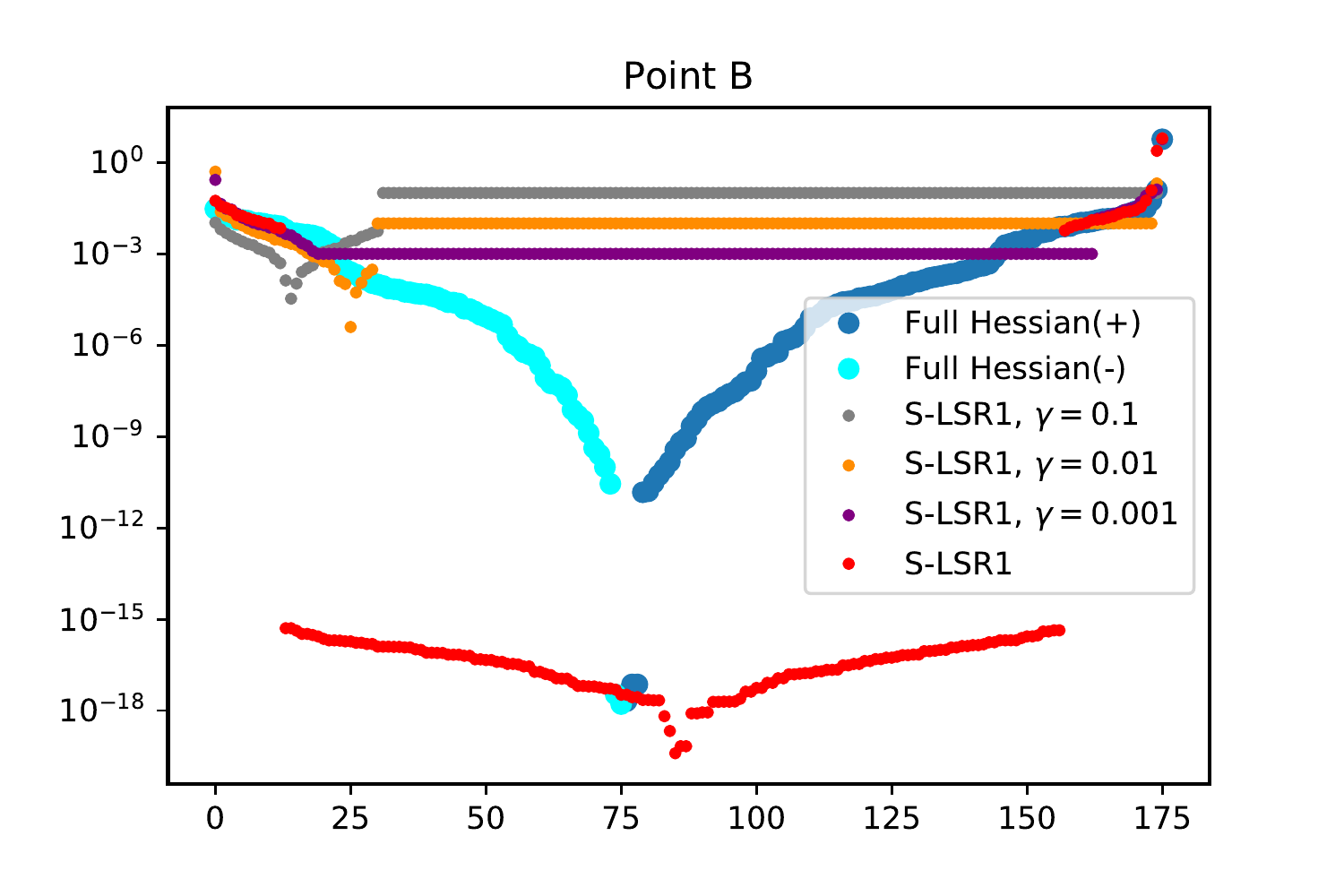}
	\includegraphics[trim=0.6cm 0.7cm 1.0cm 0.7cm,width=0.24\textwidth]{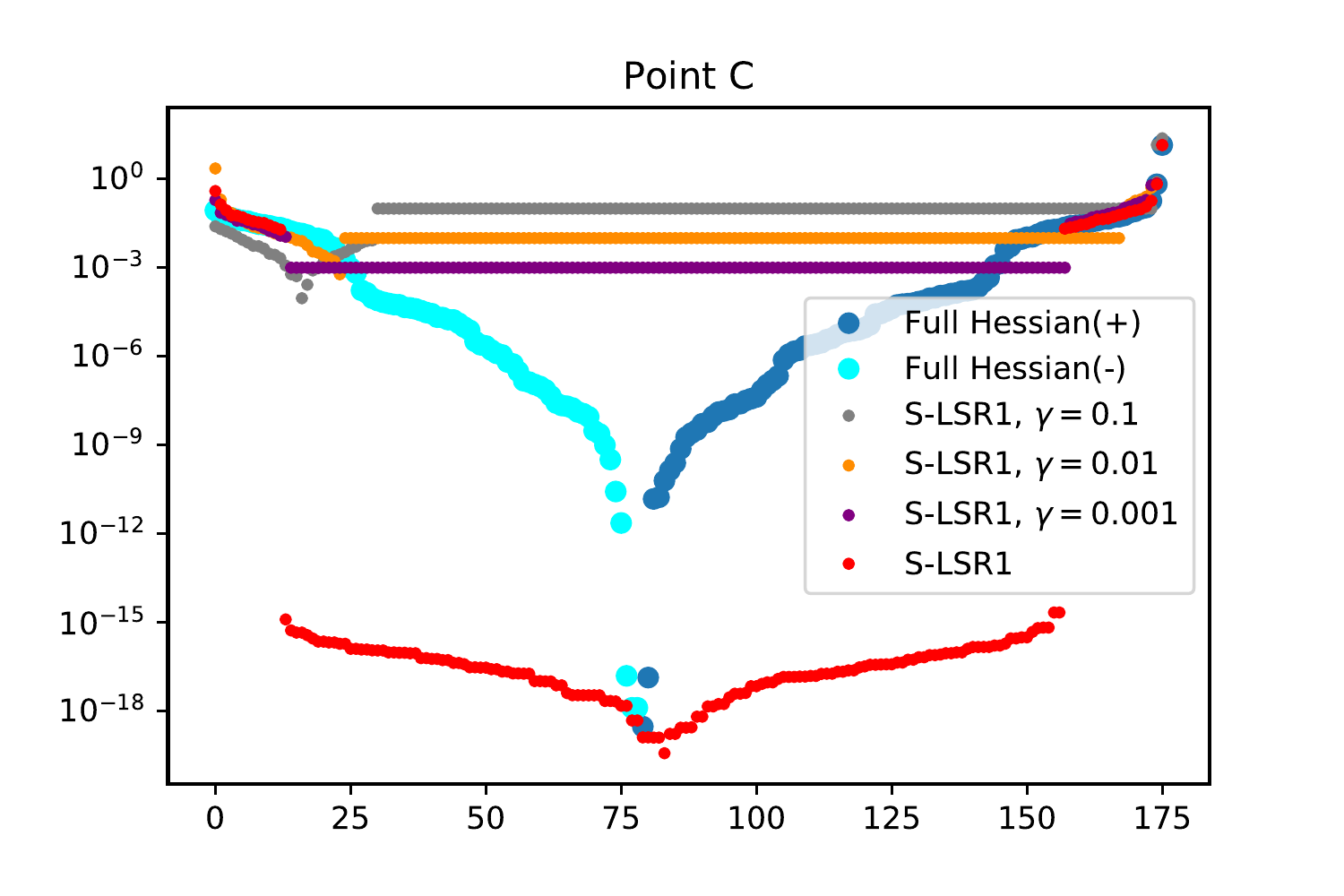}
	\caption{Comparison of the eigenvalues of S-LSR1 for different $\gamma$ (@ A, B, C) for a \texttt{medium} toy classification problem.\label{MedvdNet}}
\end{figure}

\newpage

\subsection{\texttt{Shallow} and \texttt{Deep} Network Details}
\label{app:shallow_deep}
In this section, we describe the networks used in the weak scaling experiments. For the problems corresponding to the Tables \ref{tbl:shallow} and \ref{tbl:deep}  we used ReLU activation functions and soft-max cross-entropy loss.

\begin{table}[H]
\caption{Details for \texttt{Shallow} Networks.}
\label{tbl:shallow}
\centering
\begin{footnotesize}
\begin{tabular}{@{}cccc@{}}\toprule
\textbf{Network} & \textbf{\shortstack{\# Hidden \\ Layers} } & \textbf{\shortstack{\# Nodes/ \\ Layer}}  & $\pmb{d}$\\ \midrule
1 & 1 & 1 & 805
\\ \hdashline  
2 & 1 & 10  & 7960
\\ \hdashline  
4 & 1 & 100  & 79510
\\ \hdashline  
3 & 1 & 1000  & 795010
\\
\bottomrule 
\end{tabular}
\end{footnotesize}
\end{table}

\begin{table}[H]
\caption{Details for \texttt{Deep} Networks.}
\label{tbl:deep}
\centering
\begin{footnotesize}
\begin{tabular}{@{}cccc@{}}\toprule
\textbf{Network} & \textbf{\shortstack{\# Hidden \\ Layers} } & \textbf{\shortstack{\# Nodes/ \\ Layer}}  & $\pmb{d}$\\ \midrule
1 & 7 & 2-2-2-2-2-2-2 & 817
\\ \hdashline  
2 & 7 & 10-10-10-10-10-10-10  & 8620
\\ \hdashline  
4 & 7 & 100-100-100-10-10-10-10  & 100150
\\ \hdashline  
3 & 7 & 1000-100-100-10-10-10-10  & 896650
\\
\bottomrule 
\end{tabular}
\end{footnotesize}
\end{table}

\newpage

\subsection{Weak Scaling}
\label{sec:app_strong}

In this section, we show the weak scaling properties of DS-LSR1 for two different networks, different batch sizes and different number of variables.

\begin{figure}[htbp]
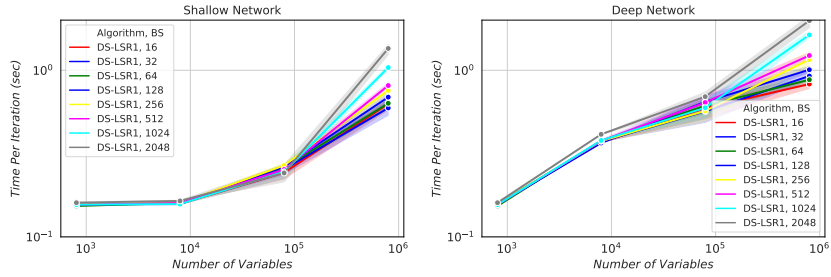

  \centering
	\includegraphics[width=0.45\textwidth]{figs/bs_all_error_band_1Layer.pdf}
 	\includegraphics[width=0.45\textwidth]{figs/bs_all_error_band_7Layer.pdf}
	\caption{\textbf{Weak Scaling:} Time/iteration (sec) vs \# of variables; \texttt{Shallow} (left), \texttt{Deep} (right).}
\end{figure}

\subsection{Strong Scaling}
\label{sec:app_strong}

In this section, we show the strong scaling properties of DS-LSR1 and naive DS-LSR1 for different memory sizes. The problem details for these experiments were as follows: LeNet, CIFAR10, $d=62006$, \cite{lecun1998gradient}.

\begin{figure}[htbp]
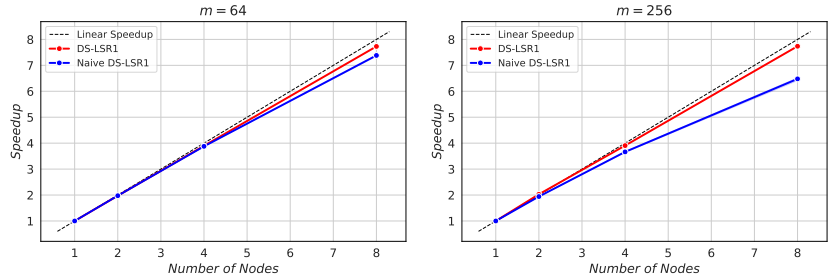

  \centering
	\includegraphics[width=0.45\textwidth]{figs/strong_speedup_mmr64-QR.pdf}
 	\includegraphics[width=0.45\textwidth]{figs/strong_speedup_mmr256_QR.pdf}
	\caption{\textbf{Strong Scaling:} Relative speedup for different number of compute nodes and different memory levels: $64$ (left), $256$ (right). \label{fig:strong1}}
\end{figure}

\newpage
\subsection{Scaling  of  Different  Components  of  DS-LSR1}

In this section, we show the scaling properties of the different components of the DS-LSR1 method and compare with the naive distributed implementation. We deconstruct the main components of the DS-LSR1 method and illustrate the scaling with respect to memory. Specifically, we show the scaling for: $(1)$ reduce time/iteration; $(2)$ time/iteration; $(3)$ CG time/iteration; $(4)$ time to sample $S$, $Y$ pairs/iteration. For all these plots, we ran $10$ iterations and averaged the time, and also show the variability.

\begin{figure}[]
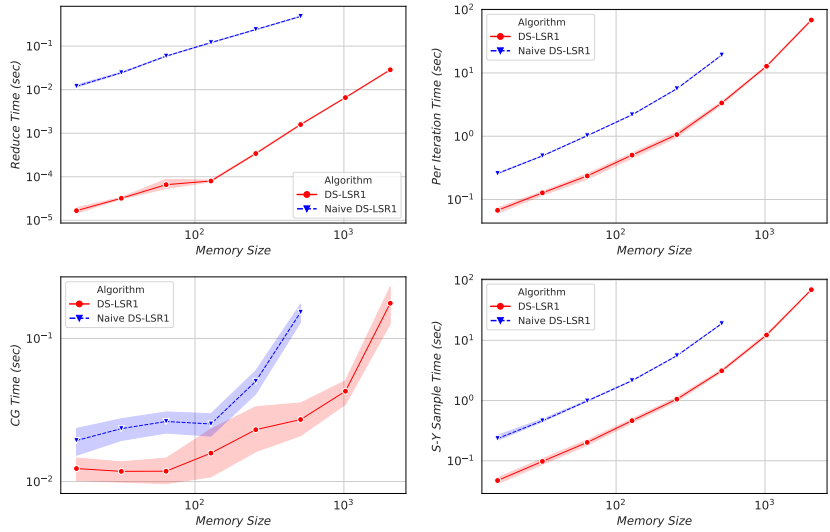

	\centering
  	\includegraphics[width=0.45\textwidth]{figs/both_mmr_error_band_Net1100_128_log.pdf}
 	\includegraphics[width=0.45\textwidth]{figs/both_mmr_error_band_Net1100_128_perIterTime_log.pdf} 
	
  	\includegraphics[width=0.45\textwidth]{figs/both_mmr_error_band_Net1100_128_CG_log.pdf}
  	\includegraphics[width=0.45\textwidth]{figs/both_mmr_error_band_Net1100_128_SY_log.pdf}
	\caption{Time (sec) for different components of DS-LSR1 with respect to memory.}
\end{figure}

\newpage
\subsection{Performance of DS-LSR1}
\label{sec:app_perf}

In this section, we show training and testing accuracy in terms of wall clock time and amount of data communicated (in GB).

\begin{figure}[htbp]
    \centering
		\includegraphics[width=0.45\textwidth]{figs/cifar_train_acc_mmr256_QR_large.pdf}
		\includegraphics[width=0.45\textwidth]{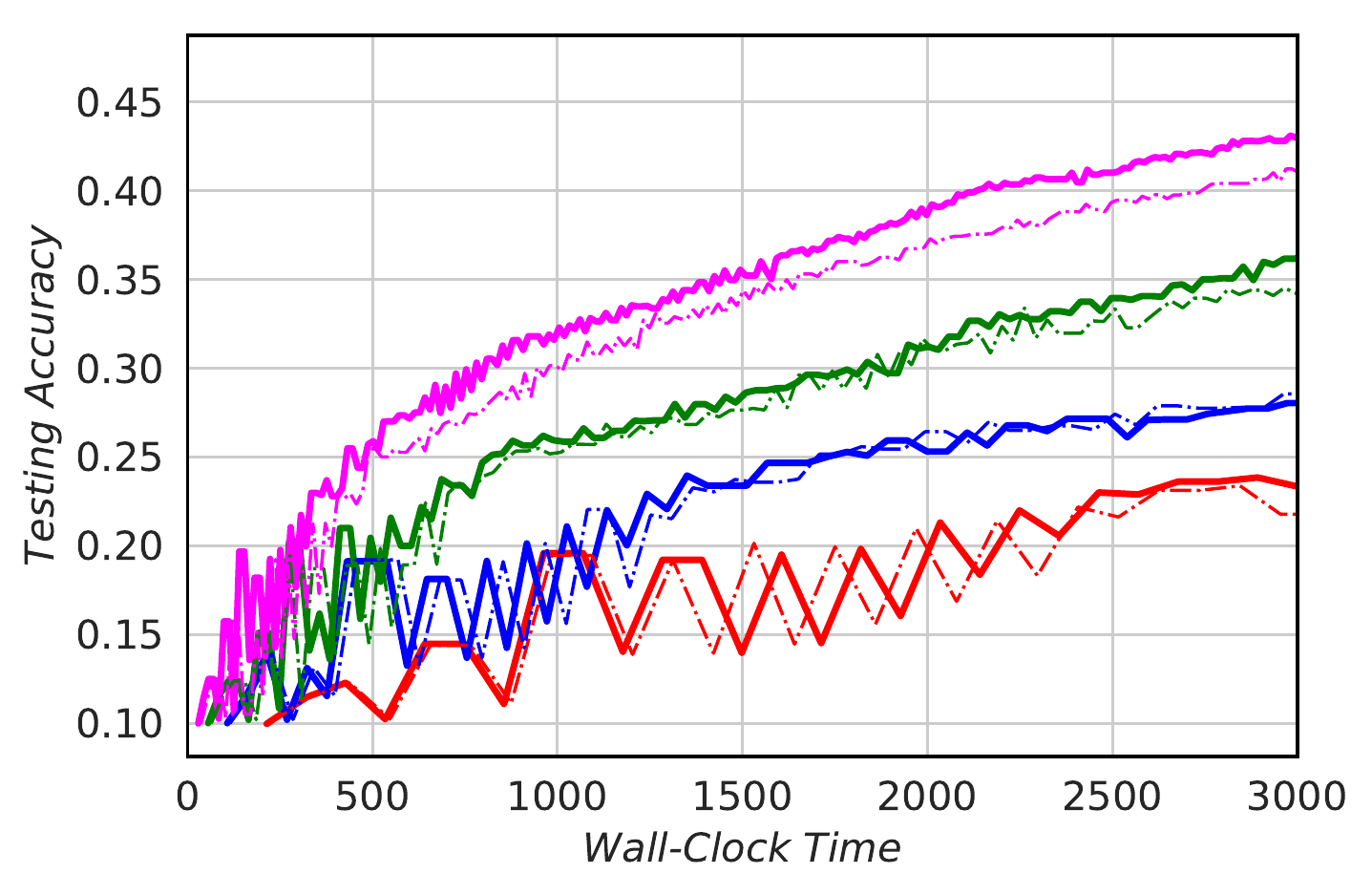}
			
		\includegraphics[width=0.45\textwidth]{figs/cifar_train_acc_mmr256_QR_float.pdf}
		\includegraphics[width=0.45\textwidth]{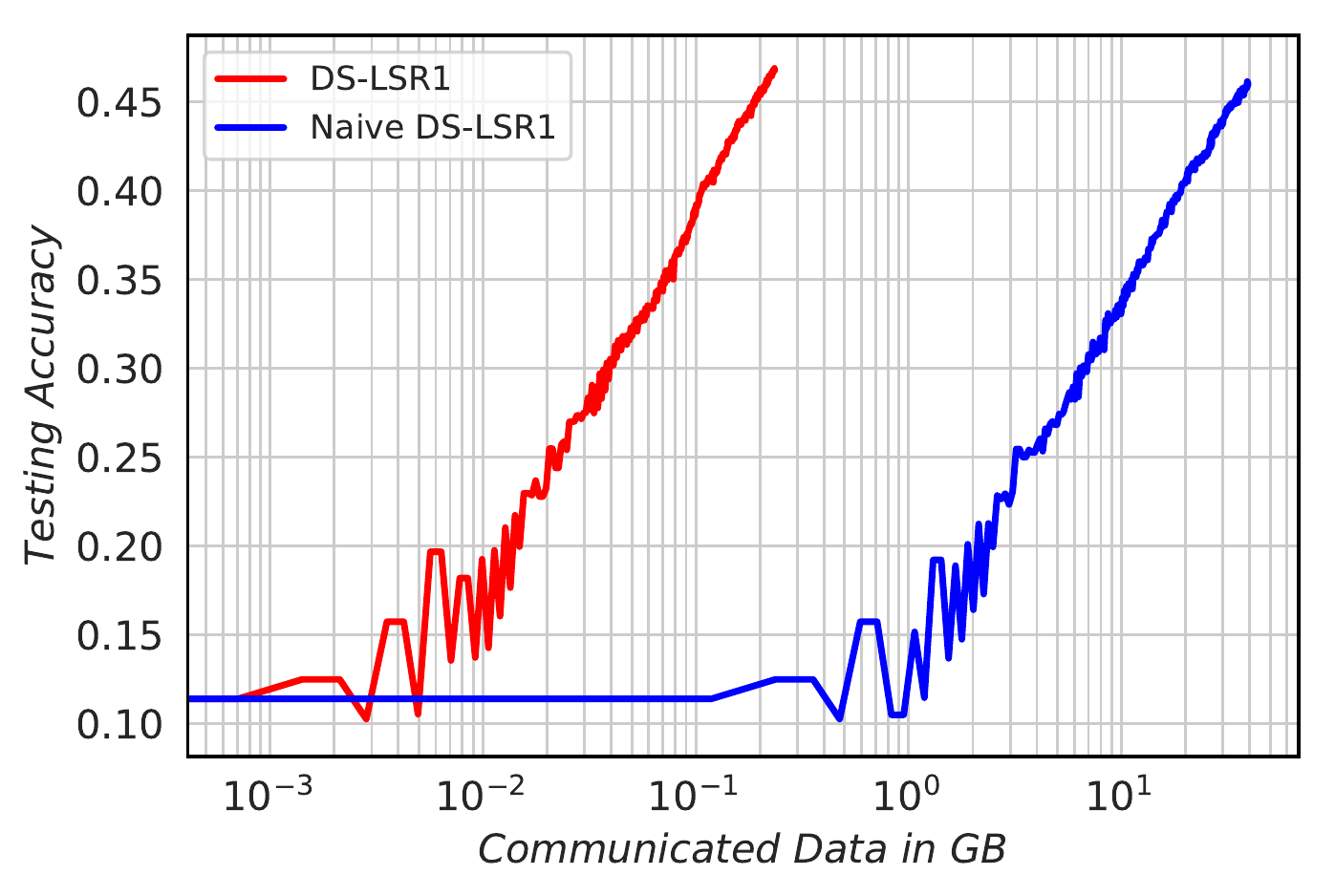}
	
		\caption{Performance of DS-LSR1 on CIFAR10 dataset with different number of nodes.}
		\label{fig:perf_app}
\end{figure}

\end{document}